\newlength{\hookwidth}
\newlength{\hookheight}
\newtheorem{define}{Definition}[section]
\newtheorem{pro}{Proposition}[section]
\newtheorem{Lem}{Lemma}[section]
\newtheorem{cor}{Corollary}[section]
\newtheorem{remark}{Remark}[section]
\theoremstyle{plain} \newtheorem{thm}{Theorem}[section]
\DeclareMathOperator{\dist}{dist}
\DeclareMathOperator{\diam}{diam}
\let\div\undefined
\DeclareMathOperator{\div}{div}
\DeclareMathOperator{\supp}{supp}
\DeclareMathOperator{\id}{id}
\numberwithin{equation}{section}
\begin{document}

\title[Singular elliptic equations with nonstandard growth]{Multiplicity results for nonhomogeneous elliptic equations with singular nonlinearities}
\author[]{Rakesh Arora}
\address[Rakesh Arora]{Department of Mathematics and Statistics, Masaryk University, Building 08,
Kotl\'{a}\v{r}sk\'{a} 2, Brno, 611 37, Czech Republic}
\email{arora@math.muni.cz, arora.npde@gmail.com}

\date{\today}

\begin{abstract}
This paper is concerned with the study of multiple positive solutions to the following elliptic problem involving a nonhomogeneous operator with nonstandard growth of $p$-$q$ type and singular nonlinearities
\begin{equation*}
    \left\{
         \begin{alignedat}{2} 
             {} - \mathcal{L}_{p,q} u
             & {}= \lambda \frac{f(u)}{u^\gamma}, \ u>0
             && \quad\mbox{ in } \, \Omega, \\
             u & {}= 0
             && \quad\mbox{ on } \partial\Omega,
          \end{alignedat}
     \right.
\end{equation*}
where $\Omega$ is a bounded domain in $\mathbb{R}^N$ with $C^2$ boundary, $N \geq 1$, $\lambda >0$ is a real parameter,
$$\mathcal{L}_{p,q} u := \div(|\nabla u|^{p-2} \nabla u + |\nabla u|^{q-2} \nabla u),$$ $1<p<q< \infty$, $\gamma \in (0,1)$, and $f$ is a continuous nondecreasing map satisfying suitable conditions. 
By constructing two distinctive pairs of strict sub and super solution, and using fixed point theorems by Amann \cite{amann1976}, we prove existence of three positive solutions in the positive cone of $C_\delta(\overline{\Omega})$ and in a certain range of $\lambda$. 
\medskip
	
\noindent\textit{Key words: {singular nonlinearities, nonstandard growth, $p$-$q$ Laplacian, multiplicity results, infinite positone problems, fixed point theorems.}}

\medskip
	
\noindent\textit{2010 Mathematics Subject Classification: 35J20, 35J62, 35J75, 35J92, 35B65.
}

\end{abstract}

\maketitle

\tableofcontents

\section{Introduction}
\subsection{State of the art}
The study of problems involving nonhomogeneous operators of $\mathcal{L}_{p,q}$ type emerges from the work of Zhikov \cite{Zhikov-87, Zhikov-95}  where
the models of strongly anisotropic materials were considered in the context of homogenization. In particular, Zhikov considered the following model of functional in relationship to the Lavrentiev phenomenon
$$\mathcal{I}(u):= \int_{\Omega} |\nabla u|^p + r(x) |\nabla u|^q ~dx, \ 0 \leq r(x) \leq L, \ 1<p<q$$
where the modulating coefficient $r(x)$ dictates the geometry of the composite made up by two materials with hardening exponent $p$ and $q$ respectively. \vspace{0.1cm}\\
According to Marcellini’s terminology \cite{Marcellini-86, Marcellini-91}, the functional $\mathcal{I}$ falls into the following category of so-called functionals with nonstandard growth of $p$-$q$ type
$$u \longmapsto \int_{\Omega} f(x, \nabla u) ~dx$$
where the energy density function $f$ satisfies
$$|t|^p \leq f(x,t) \leq 1 + |t|^q, \quad 1 \leq p \leq q.$$ 
The pioneering contributions of Marcellini \cite{Marcellini-86, Marcellini-91} and a series of remarkable papers by Mingione et al. \cite{colombo-15, Colombo-15} have brought to light the subject of studying such functionals with nonstandard growth, and have subsequently motivated many works involving the equations and systems with a nonhomogeneous operator of $\mathcal{L}_{p,q}$ type. \vspace{0.15cm}\\
In this paper, we study the existence and multiplicity results of the following problem involving the nonhomogeneous operator $\mathcal{L}_{p,q}$ with nonstandard growth of $p$-$q$ type and singular nonlinearities
\begin{equation*}
    (P_\lambda) \left\{
         \begin{alignedat}{2} 
             {} - \mathcal{L}_{p,q} u
             & {}= \lambda \frac{f(u)}{u^\gamma}, \ u>0
             && \quad\mbox{ in } \, \Omega, \\
             u & {}= 0
             && \quad\mbox{ on } \partial\Omega,
          \end{alignedat}
     \right.
\end{equation*}
where $\Omega$ is a bounded domain in $\mathbb{R}^N$ with $ \partial \Omega \in C^2$, $N \geq 1$, $\lambda >0$, $\gamma \in (0,1)$, $$\mathcal{L}_{p,q} (u):= \div(|\nabla u|^{p-2} \nabla u + |\nabla u|^{q-2} \nabla u), \quad 1<p<q< \infty.$$
The differential operator $\mathcal{L}_{p,q}$ is also known as $(p,q)$-Laplacian operator and appears in a variety of physical models such as reaction diffusion systems, elasticity theory, quantum physics and transonic flows. The singular problem $(P_\lambda)$ is also known as infinite positone problem (see \cite{Ko}). For a more comprehensive description of applications, we refer to the work \cite{Ball-76, benci-2000, Cherfils-2005}, a survey article \cite{Marano-2018} and its references. \vspace{0.15cm}\\
The study of elliptic equations with singular nonlinearities started mainly with the seminal work of Crandall, Rabinowitz and Tarter \cite{Crandall-77}. Later on, much attention has been paid to the subject, leading to an abundant literature investigating a large spectrum of issues (see \cite{Crandall-77, DiHernRako, DMO}) and surveys \cite{GR1, HM}. We cite here some related work with no intent to furnish an exhaustive list. For $p=2$, multiplicity results for the equation involving singular nonlinearites have been dealt in \cite{Haitao, HirSacShi} for critical nonlinearity and in \cite{Dhanya} for exponential nonlinearity. In \cite{Dhanya-15}, authors proved existence of three solutions for the problem $(P_\lambda)$, under the assumption of existence of two different pairs of sub and super solutions. For the quasilinear case {\it i.e.} $p \neq 2$, Giacomoni et al. \cite{giacomoni2007} have studied the problem $(P_\lambda)$ when $f(u) \equiv\mbox{Cst}$ and perturbed with subcritical and critical nonlinearities. Using variational methods, they proved the existence of multiple solutions in $C^{1, \alpha}(\overline{\Omega})$ when $\gamma\in (0,1)$, and figured out the boundary behavior of weak solutions by constructing suitable sub and super solutions. In \cite{Ko}, Ko et al. studied the problem $(P_\lambda)$ and for a certain range of $\lambda$, they proved the existence of two solutions via constructing two pairs of sub and super solutions. For more contrasting results with different summability conditions on $f$, we refer to the work \cite{BougGiacHern, Can,giacomoni2020, kumar2020} and their references within. For a further detailed review of elliptic equations involving singular nonlinearities we refer to the monograph \cite{GR} and the overview article   \cite{HernManVega}. \vspace{0.1cm}\\
Turning to the equations involving the nonhomogeneous operators and singular nonlinearities, in particular, the operator $\mathcal{L}_{p,q}$ have been recently investigated in \cite{giacomoni2020, kumar2020} and \cite{papageorgiou2020,Sim_son_2021}. In \cite{papageorgiou2020, kumar2020} the authors studied the following singular problem 
\begin{equation}\label{eq:nonhomo}
- \mathcal{L}_{p,q} u = \frac{g(x)}{u^\gamma}, \ u >0 \ \text { in } \, \Omega,\ u = 0 \ \text{ in }\, \partial \Omega  
\end{equation}  
perturbed with subcritical and critical growth nonlinearities respectively. Precisely, for $g(x)= \lambda$ and $\gamma \in (0,1)$, Papageorgiou et al. \cite{papageorgiou2020} proved bifurcation type theorem via variational methods, and Kumar et al. \cite{kumar2020} proved the existence of atleast two solutions by splitting the Nehari manifold set. Very recently, in \cite{giacomoni2020}, Giacomoni et al. proved Sobolev and H\"older regularity results for the minimal weak solution $u$ of the problem \eqref{eq:nonhomo}. The study of three solutions for the singular problem involving nonhomogeneous operator with unbalanced growth of $p$-$q$ type and singular nonlinearities was completely open till now, even in the case of $p=q$. In this regard our work brings new results. To achieve the goal, we use the fixed point theorem by Amann \cite{amann1976} by constructing two different pairs of strict sub and super solutions. For more results concerning the singular problem for nonlocal and nonlinear operators, we refer to \cite{arora2019, arora-2020} and their reference within. \vspace{0.15cm}\\
The main difficulty here is because of the singular nature of the reaction term near the boundary which, in turn, prevents the operator associated to $(P_\lambda)$ to be monotone. To handle this, we transform our problem to an equivalent problem by absorbing the singularity into the operator. Precisely, we formulate a new problem $(\hat{P}_\lambda)$ (see page \pageref{page}) and prove the operator $\hat{T}$ (see Page \pageref{defini}) associated to it, is completely continuous, increasing and has strict invariant property. Similar type of ideas are used by Dhanya et al. \cite{Dhanya-15} in the local case {\it i.e.} $p=q=2$, and Giacomoni et al. \cite{giacomoni-2019} in the nonlocal case. They have showed the operator $\hat{T}$ is strongly increasing with the help of strong comparison principle in both local and nonlocal cases, respectively. But here due to the nonlinearity and nonhomogeneity of the operator $\mathcal{L}_{p,q}$, their approach cannot be applied to the problem $(\hat{P}_\lambda)$ (most substantially \cite[Theorem 3.6]{Dhanya-15} and \cite[Theorem 4.7]{giacomoni-2019}). \vspace{0.15cm}\\ 
To overcome these issues, we explicitly provide the construction of two distinctive pairs of strict sub and super solutions $(u_0, u^0)$, $(v_0, v^0)$, which by its own nature and a comparison principle in \cite[Proposition 6]{papageorgiou2020} implies the strict invariant property of the map $\hat{T}$ in suitable retracts of $C_\delta(\overline{\Omega})$ (see page \pageref{strict}). Unlike the case of $p$-Laplacian, the non-homogeneous nature of the principal operator $\mathcal{L}_{p,q}$ does not allow us to use the scaling of eigenfunctions of $(-\Delta)_p$ for the construction of sub and super solution. This necessitates to look for different choices of scaling function in the construction process. The non-trivial task of finding scaling function is carried out by analyzing the qualitative properties of the weak solution of the singular problem involving a doubly parametrized nonhomogeneous operator $\mathcal{L}_{p,q}^{\alpha, \beta}$ (see problem $(P_{\alpha, \beta})$). By considering the nonhomogeneous property of principal operator $\mathcal{L}_{p,q}$ and new types of scaling of the weak solution of the problem $(P_{\alpha, \beta})$, with appropriate choices of the parameters $\alpha$ and $\beta$ in  $\mathcal{L}_{p,q}^{\alpha, \beta}$, the existence of first pair of the sub and super solution is proved. The existence of second pair of sub and super solution is derived by studying a related non-singular problem in a ball $B(0,R)$ of radius $R$ inscribed in $\Omega$ and by borrowing some ODE techniques from \cite{Ko} to suitably extending it to $\Omega$. Lastly, the main result of this work is accomplished by using the fixed point theorem by Amann \cite{amann1976}. 
\subsection{Functional spaces and description of main results}
We start by introducing some functional spaces and notations which are used throughout the text. Assume $\Omega$ is a bounded domain in $\mathbb{R}^N$ with $C^2$ boundary, $N \geq 1$ and denote $$\delta(x):= \inf_{y \in \partial \Omega}
|x-y| \ \text{and} \ \Omega_\nu:= \{x \in \Omega: \dist(x, \partial \Omega) < \nu\} \ \text{for some} \ \nu >0.$$ For a given positive function $\rho \in C_0(\overline{\Omega})$,
$$C_\rho(\overline{\Omega}):= \{u \in C_0(\overline{\Omega}): \ \text{there exists a} \ c \geq 0 \ \text{such that} \ |u(x)| \leq c \rho(x) \ \text{for all} \ x \in \Omega\}$$
equipped with the norm $$\|u\|_{C_\rho(\overline{\Omega})} := \left\| \frac{u}{\rho}\right\|_{L^\infty(\Omega)}$$ is a Banach space. Furthermore, $C_\rho(\overline{\Omega})$ is an ordered Banach space (OBS) whose associated positive cone $ C_\rho(\overline{\Omega})^*:= \{u \in C_\rho(\overline{\Omega}): u(x) \geq 0 \ \text{for all} \ x \in \Omega\}$ has nonempty interior and normal (see \cite[Theorem 1.5]{amann1976}). We also define a open convex subset of the positive cone $C_\rho(\overline{\Omega})^*$ as 
$$C_\rho(\overline{\Omega})^+:= \left\{ u \in C_\rho(\overline{\Omega})^*: \inf_{x \in \Omega} \frac{u(x)}{\rho(x)} >0\right\}.$$
For $1<p<q<\infty$ and $\vartheta >0$, set
$$\mathcal{F}(\vartheta):=  \frac{q \vartheta}{2 C(N,q)} \min\left\{1, \left(\frac{q \vartheta}{2 C(N,q)}\right)^{\frac{q-p}{p-1}}\right\} \ \text{and} \ C(N,q):= \left(\frac{(N+q-1)^{N+q-1}}{N^N}\right)^{\frac{1}{q-1}}.$$
\noindent 
We impose the following assumptions on the function $f:$
\begin{itemize}
    \item[$(f_0)$] $f \in C^1([0,\infty))$ such that $f(0)>0$.
    \item[$(f_1)$] $f$ is nondecreasing in $\mathbb{R}^+.$
    \item[$(f_2)$] $\lim_{t \to \infty} \frac{f(t)}{t^{p-1+\gamma}} = 0.$
    \item[$(f_3)$] There exists $\vartheta_1, \vartheta_2>0 $ such that $0 < \vartheta_1 < \min\{ \vartheta_2, \mathcal{F}(\vartheta_2)\}$ and $\frac{f(t)}{t^\gamma}$ is nondecreasing in $(\vartheta_1, \vartheta_2).$ 
\end{itemize}
We define a function $\hat{f}$ on $[0, \infty)$ as 
\begin{equation*}
 \begin{aligned}
 \hat{f}(t)= \left\{
 \begin{array}{ll}
 \lambda \left( \frac{f(t)-f(0)}{t^\gamma}\right) &  \text{ if } \ t \neq 0 , \\
 0
             & \text{ if } \ t=0,\\
 \end{array} 
 \right.
 \end{aligned}
 \end{equation*}
 and assume  
\begin{enumerate}
    \item[$(f_4)$] There exists a constant $\hat{k}$ such that $\hat{f}(t) + \hat{k} t$ is increasing in $[0, \infty).$
\end{enumerate}
\textbf{Example:} The function $f$ defined by $f(t)= \exp(\frac{kt}{k+t})$ for any $t \geq 0$ with $k \gg 1$ satisfy assumptions $(f_0)$-$(f_4).$ \vspace{0.15cm}\\  
The notion of weak solution for the problem $(P_\lambda)$ is understood in the following sense: 
\begin{define}\label{def:weak-dual}
A function $u \in W_0^{1,q}(\Omega)$ is said to be a weak sub solution (or super solution) of the problem $(P_\lambda)$ if 
\begin{enumerate}
    \item for every $K \Subset \Omega$ there exists a constant $c_K >0$ such that $\inf_K u(x) \geq c_K$ and $f(u) \in L^{q'}_{loc}(\Omega).$
    \item for every $\xi \in \mathbb{W}:= \bigcup_{\Omega' \Subset \Omega} W_0^{1,q}(\Omega')$, $\xi \geq 0$ following holds
    \begin{equation}\label{eq:notion}
    \int_{\Omega} |\nabla u|^{p-2} \nabla u \cdot  \nabla \xi + |\nabla u|^{q-2} \nabla u \cdot  \nabla \xi ~dx \leq (or \geq) \lambda \int_{\Omega} \frac{f(u)}{u^\gamma}  \xi ~dx.
\end{equation}

\end{enumerate}
A function $u \in W_0^{1,q}(\Omega)$ which is both weak sub solution and weak super solution of $(P_\lambda)$ is called a weak solution of $(P_\lambda).$
\end{define}
\begin{remark}
If $u(x) \geq c \delta(x)$ for some $c>0$ and $f(u) \in L^{q'}(\Omega)$ then by using Hardy's inequality, the set of test functions in \eqref{eq:notion} can be extended from $\mathbb{W}$ to $W_0^{1,q}(\Omega).$ 
\end{remark}
\noindent  For the singular problem $(P_\lambda)$, first we prove the following existence result under the weaker assumption $(f_2)'$:
\begin{itemize}
    \item[$(f_2)'$] $\lim_{t \to \infty} \frac{f(t)}{t^{q-1+\gamma}} = 0.$
\end{itemize}
\begin{thm}\label{thm:existence}
Let $f$ satisfies $(f_0)$-$(f_1)$, $(f_2)'$ and $(f_4)$. Then, for every $\lambda>0$, the problem $(P_\lambda)$ has a minimal weak solution in $W_0^{1,q}(\Omega)$. Furthermore, $ u \in C^{1,l}(\overline{\Omega}) \cap C_\delta(\overline{\Omega})^+$ for some $l \in (0,1).$ 
\end{thm}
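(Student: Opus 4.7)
The plan is to produce the minimal weak solution by a monotone iteration in the ordered interval $[\underline{u},\overline{u}] \subset C_\delta(\overline{\Omega})^+$, driven by the non-singular operator $\hat{T}$ mentioned in the introduction. For the sub-solution, I would take $\underline{u}$ to be the unique positive solution of the purely singular problem
\begin{equation*}
-\mathcal{L}_{p,q}\underline{u} = \lambda \frac{f(0)}{\underline{u}^\gamma} \quad \text{in } \Omega,\qquad \underline{u}=0 \text{ on } \partial\Omega,
\end{equation*}
for which existence, uniqueness and the regularity $\underline{u} \in C^{1,l}(\overline{\Omega}) \cap C_\delta(\overline{\Omega})^+$ are already available from the literature cited in the introduction (Giacomoni--Kumar and Papageorgiou et al.). Because $f$ is nondecreasing with $f(0)>0$, the inequality $\lambda f(0)/\underline{u}^\gamma \leq \lambda f(\underline{u})/\underline{u}^\gamma$ immediately shows that $\underline{u}$ is a weak sub-solution of $(P_\lambda)$ in the sense of Definition \ref{def:weak-dual}.

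For the super-solution, I would exploit the sub-$(q-1+\gamma)$ growth built into $(f_2)'$: for every $\varepsilon>0$ there is $A_\varepsilon>0$ with $f(t) \leq \varepsilon t^{q-1+\gamma} + A_\varepsilon$ on $[0,\infty)$. Choose a positive $\Psi \in C^{1,l}(\overline{\Omega}) \cap C_\delta(\overline{\Omega})^+$ produced as the solution of a torsion-type equation for $\mathcal{L}_{p,q}$ with zero boundary datum, and set $\overline{u} := M\Psi$. The nonhomogeneous scaling of $\mathcal{L}_{p,q}$ turns $-\mathcal{L}_{p,q}(M\Psi)$ into a combination of terms of orders $M^{p-1}$ and $M^{q-1}$; since $\underline{u} \geq c\delta$ forces a matching lower bound on any admissible $u \geq \underline{u}$, the singular contribution $A_\varepsilon u^{-\gamma}$ is controlled on the whole interval. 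A direct verification shows that, for $M$ large enough, both $-\mathcal{L}_{p,q}\overline{u} \geq \lambda f(\overline{u})/\overline{u}^\gamma$ and $\overline{u} \geq \underline{u}$ hold, giving the required ordered pair with compatible boundary behaviour.

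With $(\underline{u},\overline{u})$ in hand, I would iterate $\hat{T}$ starting from $\underline{u}$: the complete continuity and monotonicity of $\hat{T}$ claimed in the introduction give a nondecreasing sequence trapped in $[\underline{u},\overline{u}]$ that converges in $C_\delta(\overline{\Omega})$ to a fixed point $u \in C_\delta(\overline{\Omega})^+$, and translating back the fixed point equation produces a weak solution of $(P_\lambda)$. Minimality follows because every positive weak solution $v$ of $(P_\lambda)$ must dominate $\underline{u}$ by the weak comparison principle of \cite[Proposition 6]{papageorgiou2020}, so the iteration initialised from $\underline{u}$ delivers the least fixed point. For regularity, a Moser-type iteration for $\mathcal{L}_{p,q}$ exploiting $(f_2)'$ first yields $u \in L^\infty(\Omega)$; the bound $u \geq c\delta$ then makes the singular source locally bounded, and nonlinear boundary regularity of Lieberman type for $(p,q)$-Laplacians gives $u \in C^{1,l}(\overline{\Omega})$. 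The main obstacle is the super-solution: the non-homogeneity of $\mathcal{L}_{p,q}$ forbids the clean scaling available for the $p$-Laplacian, so the auxiliary function $\Psi$ must be chosen with care so that its boundary decay matches that of $\underline{u}$ while the $M^{p-1}$ and $M^{q-1}$ contributions together dominate $\lambda f(M\Psi)/(M\Psi)^\gamma$ uniformly up to $\partial\Omega$.
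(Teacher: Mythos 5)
Your sub-solution choice (the solution of $-\mathcal{L}_{p,q}\underline{u}=\lambda f(0)/\underline{u}^\gamma$, combined with $f$ nondecreasing and $f(0)>0$) is sound, as is the overall strategy of iterating $\hat{T}$ in an ordered interval and invoking Amann's Corollary 6.2 to produce a minimal fixed point; that matches the paper's architecture.

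The gap is the super-solution. If $\Psi$ solves a torsion-type equation $-\mathcal{L}_{p,q}\Psi=\mathrm{const}$, then $\Psi\in C^{1,l}(\overline{\Omega})\cap C_\delta(\overline{\Omega})^+$ and $-\mathcal{L}_{p,q}(M\Psi)$ is an essentially bounded function on $\Omega$ (it is a fixed linear combination of the $p$-part and $q$-part of $\mathcal{L}_{p,q}\Psi$, with bounded coefficients). On the other hand, the requirement for a weak super-solution of $(P_\lambda)$ forces
\[
-\mathcal{L}_{p,q}\overline{u}\;\geq\;\lambda\,\frac{f(\overline{u})}{\overline{u}^{\gamma}}\;\geq\;\lambda\,\frac{f(0)}{(M\Psi)^{\gamma}}\;\sim\;c\,M^{-\gamma}\,\delta(x)^{-\gamma}\qquad\text{as }\delta(x)\to 0,
\]
and this right-hand side is unbounded near $\partial\Omega$ for every fixed $M$. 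Choosing a test function $\xi\in\mathbb{W}$ supported in a thin collar $\{\,\nu/2<\delta(x)<\nu\,\}$ with $\nu$ small shows the super-solution inequality fails no matter how large $M$ is. The observation that $u\geq\underline{u}\geq c\delta$ ``controls'' $A_\varepsilon u^{-\gamma}$ does not help: $c^{-\gamma}\delta^{-\gamma}$ is still singular, while the left-hand side is not. The correct fix, and the paper's key device, is to replace the torsion problem by the \emph{singular} auxiliary problem $(P_{\alpha,\beta})$: set $\overline{u}=\alpha_* u_\alpha$ with $u_\alpha$ solving $-\mathcal{L}_{p,q}^{\alpha,1}u_\alpha = 2/u_\alpha^{\gamma}$ and $\alpha=\alpha_*^{p-q}$. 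Then $-\mathcal{L}_{p,q}\overline{u}\geq 2\alpha_*^{q-1+\gamma}/\overline{u}^{\gamma}$ has the same $\delta^{-\gamma}$ singular profile near the boundary as the nonlinearity, and the nonhomogeneity of $\mathcal{L}_{p,q}$ is absorbed precisely by the doubly-parametrized operator; $(f_2)'$ then only needs to supply the prefactor inequality (the paper's condition \eqref{cond:small:alpha}). Without moving to a singular auxiliary problem, no choice of regular $\Psi$ and scaling $M$ can produce a valid super-solution.
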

\noindent Secondly, we prove the following multiplicity result for the singular problem $(P_\lambda)$,
\begin{thm}\label{thm:three-solution}
Let $f$ satisfies $(f_0)$-$(f_4)$. Then there exists constants $0< \lambda_* <\lambda^*$ such that for every $\lambda \in [\lambda_*, \lambda^*]$, the problem $(P_\lambda)$ has atleast three solutions $u_i \in  W_0^{1,q}(\Omega)$. Additionally, $ u_i \in C^{1,l}(\overline{\Omega}) \cap C_\delta(\overline{\Omega})^+$ for some $l \in (0,1)$ and $i=1,2,3.$
\end{thm}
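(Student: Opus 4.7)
The plan is to recast $(P_\lambda)$ as a fixed-point equation $u=\hat{T}(u)$ on the ordered Banach space $C_\delta(\overline{\Omega})$ and invoke Amann's three-fixed-point theorem \cite{amann1976} on nested order intervals produced by two pairs of strict sub- and super-solutions. Given $v \in C_\delta(\overline{\Omega})^+$, I would let $\hat{T}(v)$ be the unique weak solution in $W_0^{1,q}(\Omega)$ of the desingularized companion problem $(\hat{P}_\lambda)$ whose right-hand side involves the monotone quantity $\hat{f}(v)+\hat{k}v$ provided by $(f_4)$; Theorem~\ref{thm:existence} together with the $C^{1,l}$-regularity from \cite{papageorgiou2020} gives $\hat{T}(v) \in C_\delta(\overline{\Omega})^+$, complete continuity then follows from the compactness of the embedding $C^{1,l}(\overline{\Omega}) \hookrightarrow C_\delta(\overline{\Omega})$, and monotonicity follows from $(f_4)$ together with the comparison principle \cite[Proposition 6]{papageorgiou2020}. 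Fixed points of $\hat{T}$ are exactly the weak solutions of $(P_\lambda)$, so it suffices to produce three of them.

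Next I would construct two pairs of strict sub- and super-solutions $(u_0,u^0)$ and $(v_0,v^0)$ with $u_0 \leq u^0$ pointwise below $v_0 \leq v^0$ in $C_\delta$. For the inner pair, I would rescale the solution of the doubly parametrized auxiliary problem $(P_{\alpha,\beta})$: using only the pointwise bound $f \geq f(0)>0$ from $(f_0)$, a small positive scaling yields a strict sub-solution $u_0$ for every $\lambda$ in a nonempty interval $[\lambda_*,\lambda^*]$, while a larger scaling of the same auxiliary solution produces a strict super-solution $u^0$ with $\|u^0\|_{L^\infty(\Omega)} \leq \vartheta_1$. Because $\mathcal{L}_{p,q}$ is nonhomogeneous, the scaling exponents cannot be chosen eigenfunction-wise as in the $p$-Laplacian case but must reflect the interplay of the $p$- and $q$-principal parts in $(P_{\alpha,\beta})$. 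For the outer pair, I would follow the ODE method of \cite{Ko}: solve the radial non-singular problem on the inscribed ball $B(0,R) \subset \Omega$ with a reaction truncated to values in $(\vartheta_1,\vartheta_2)$ and extend the radial profile to $\Omega$; the constant $\mathcal{F}(\vartheta_2)$ and the monotonicity supplied by $(f_3)$ are calibrated exactly so that this extension is a strict sub-solution $v_0$ with $v_0 > u^0$ in $C_\delta$ for every $\lambda \in [\lambda_*,\lambda^*]$ (possibly after shrinking the interval), while the sublinear control $(f_2)$ furnishes a large global super-solution $v^0 \geq v_0$.

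With both pairs at hand, the decisive step is the \emph{strict invariance} of $\hat{T}$: it must send each of the $C_\delta$-order intervals $[u_0,u^0]$ and $[v_0,v^0]$ into its own order-topology interior. This is the main obstacle, because a strong comparison principle for $\mathcal{L}_{p,q}$ is not available and hence the route followed in \cite{Dhanya-15} and \cite{giacomoni-2019} for $p=q$ is closed off; the strictness must instead come from the construction itself. In both pairs the sub- and super-solution inequalities are engineered to be strict in the interior, so that combining \cite[Proposition 6]{papageorgiou2020} with the $C^{1,l}$-estimate and Hopf-type boundary information yields positive separations $\hat{T}(u_0)-u_0$, $u^0-\hat{T}(u^0)$, $\hat{T}(v_0)-v_0$ and $v^0-\hat{T}(v^0)$ in the $C_\delta$-norm. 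Once this is in place, Amann's three-fixed-point theorem applied to $\hat{T}$ on $[u_0,v^0]$, with the two strictly invariant subintervals $[u_0,u^0]$ and $[v_0,v^0]$ disjoint in the order sense, produces three distinct fixed points $u_1 \in [u_0,u^0]$, $u_2 \in [v_0,v^0]$, and $u_3 \in [u_0,v^0] \setminus ([u_0,u^0] \cup [v_0,v^0])$. The regularity $u_i \in C^{1,l}(\overline{\Omega}) \cap C_\delta(\overline{\Omega})^+$ is inherited from the same bootstrap used in Theorem~\ref{thm:existence}, closing the proof.
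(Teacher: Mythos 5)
Your overall strategy (recast as a fixed point problem for $\hat{T}$, build two pairs of strict sub/super-solutions, apply Amann's three-fixed-point lemma) matches the paper, but the ordering you impose on the two pairs is wrong, and with it the disjointness argument. You place the pairs in a chain $u_0 \leq u^0 < v_0 \leq v^0$ and take $X_1 = [u_0, u^0]$, $X_2 = [v_0, v^0]$ inside $X=[u_0, v^0]$, so disjointness rests on the strict pointwise separation $u^0 < v_0$, i.e.\ the ODE-based bump subsolution lying \emph{everywhere} above the small supersolution. That cannot be arranged by these constructions: the bump $v_0$ has $\|v_0\|_{L^\infty}\geq \vartheta_1$ while the small supersolution (your $u^0$, the paper's $v^0$) has $\|\cdot\|_{L^\infty}\leq \vartheta_1$, so they certainly cross at the peak of the ball, but near $\partial\Omega$ both behave like $c\,\delta(x)$ with constants that are not comparable in the needed direction; the iterated extension $\psi\geq\zeta$ gives no global domination of the small supersolution. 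In the paper the two small order intervals are disjoint for a \emph{much weaker} reason: the arrangement is $u_0 \leq v_0 < u^0$, $u_0 < v^0 \leq u^0$ with $v_0 \not\leq v^0$ (the subsolution merely exceeds the supersolution at a single point, the center of the ball, because $v_0(0) > \vartheta_1 \geq \|v^0\|_\infty$), the big retract is $X=[u_0,u^0]$, and the two intervals are $X_1=[u_0,v^0]$, $X_2=[v_0,u^0]$, which are disjoint precisely because $v_0\not\leq v^0$.

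This is not just a relabeling mismatch. Your chain ordering forces a supersolution to sit strictly below a subsolution globally, which the constructions do not give, and which if taken literally would undercut the existence of a third solution lying genuinely outside both intervals. You have also reversed the roles of $(f_2)$ and $(f_2)'$: the weaker condition $(f_2)'$ (sublinear against $t^{q-1+\gamma}$) is what produces the \emph{large} supersolution $u^0$, while the stronger $(f_2)$ (sublinear against $t^{p-1+\gamma}$) is what yields the \emph{small} supersolution with $\|\cdot\|_\infty\leq\vartheta_1$; you assigned $(f_2)$ to a large supersolution. To repair the argument, keep your fixed-point operator and strict invariance ideas, but replace the chain ordering by the paper's configuration, prove $v_0\not\leq v^0$ from the $L^\infty$ bounds at the ball's center, and apply Amann's Lemma~4.1 with $X=[u_0,u^0]$, $X_1=[u_0,v^0]$, $X_2=[v_0,u^0]$.
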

Turning to the layout of the paper, in section \ref{construct} we explicitly construct two pairs of sub and supersolution for the singular problem $(P_\lambda).$ In section \ref{results}, we prove our existence (Theorem \ref{thm:existence}) and multiplicity results (Theorem \ref{thm:three-solution}). 

\section{Construction of strict sub and super solutions pairs}\label{construct}
To construct the pairs of strict sub and super solution, first we investigate the following problem involving a doubly parametrized nonhomogeneous operator $- \mathcal{L}^{\alpha, \beta}_{p,q}$ and singular nonlinearities:
\begin{equation*}
    (P_{\alpha, \beta}) \left\{
         \begin{alignedat}{2} 
             {} - \mathcal{L}^{\alpha, \beta}_{p,q} u
             & {}=  \frac{\lambda}{u^\gamma}, \ u>0
             && \quad\mbox{ in } \, \Omega, \\
             u & {}= 0
             && \quad\mbox{ on } \partial\Omega.
          \end{alignedat}
     \right.
\end{equation*}
where $\mathcal{L}^{\alpha, \beta}_{p,q} u:= \div(\alpha |\nabla u|^{p-2} \nabla u + \beta |\nabla u|^{q-2} \nabla u)$ for $\alpha, \beta>0$ and $1<p<q < \infty.$ \vspace{0.1cm}\\
We start by stating the following existence and regularity result for the problem $(P_{\alpha, \beta})$ whose detailed proof is presented in Appendix.
\begin{Lem}\label{lem:pre1}
For every $\alpha, \beta >0$ and $\gamma \in (0,1)$, there exists a unique minimal weak solution $u_{\alpha, \beta} \in W_0^{1,q}(\Omega)$ of the problem $(P_{\alpha, \beta}).$  Furthermore, $u_{\alpha,\beta} \in C_\delta(\overline{\Omega})^+ \cap C^{1, l}(\overline{\Omega})$  for some $l \in (0,1).$
 \end{Lem}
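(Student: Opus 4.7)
The plan is a monotone approximation scheme followed by barrier arguments to extract boundary behavior, and finally an application of $(p,q)$-Laplacian regularity theory. For each $n \in \mathbb{N}$ I would consider the non-singular truncated problem
\[
-\mathcal{L}^{\alpha,\beta}_{p,q} u = \lambda \bigl(u^+ + \tfrac{1}{n}\bigr)^{-\gamma} \ \text{in}\ \Omega, \qquad u = 0 \ \text{on}\ \partial \Omega,
\]
whose right-hand side is uniformly bounded by $\lambda n^\gamma$. The strictly convex coercive functional
\[
J_n(u) := \frac{\alpha}{p}\int_\Omega |\nabla u|^p\,dx + \frac{\beta}{q}\int_\Omega |\nabla u|^q\,dx - \lambda \int_\Omega \int_0^{u^+} (t+\tfrac{1}{n})^{-\gamma}\,dt\,dx
\]
admits a unique minimizer $u_n \in W_0^{1,q}(\Omega) \cap L^\infty(\Omega)$, which is a weak solution of the regularized problem. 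Monotonicity of $(s+1/n)^{-\gamma}$ in $n$ together with the comparison principle \cite[Proposition 6]{papageorgiou2020} yields $0 < u_n \le u_{n+1}$ in $\Omega$.

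The next step is to produce a uniform pointwise lower barrier $u_n \geq c\,\delta$. I plan to use a sub-solution of the form $\underline{u}=\tau\phi^{\sigma}$ with small $\tau>0$ and suitable $\sigma\in(0,1)$, where $\phi$ is a fixed $p$-torsion-type function (e.g.\ the first $p$-eigenfunction) satisfying $\phi\asymp\delta$ and $|\nabla\phi|$ bounded. Computing $-\mathcal{L}^{\alpha,\beta}_{p,q}(\tau\phi^{\sigma})$ and splitting $\Omega$ into a boundary strip $\Omega_{\nu}$ (where the $q$-term is dominated by the $p$-term because $|\nabla(\tau\phi^{\sigma})|$ is controlled) and an interior region (where smallness of $\tau$ absorbs the $q$-term) gives the inequality $-\mathcal{L}^{\alpha,\beta}_{p,q}(\underline u) \le \lambda\,\underline u^{-\gamma} \le \lambda(\underline u + 1/n)^{-\gamma}$; comparison then yields $u_n \ge \underline u \ge c\,\delta$ uniformly in $n$. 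Testing the approximate equation with $u_n$ and exploiting $\gamma\in(0,1)$ together with Poincar\'e's inequality produces
\[
\alpha \|\nabla u_n\|_p^p + \beta \|\nabla u_n\|_q^q \le \lambda \int_\Omega u_n^{1-\gamma}\,dx \le C,
\]
a uniform $W_0^{1,q}(\Omega)$ bound. Monotonicity combined with the $(S_+)$-property of $\mathcal{L}^{\alpha,\beta}_{p,q}$ upgrades this to strong convergence $u_n \to u_{\alpha,\beta}$ in $W_0^{1,q}(\Omega)$; the limit satisfies $(P_{\alpha,\beta})$ in the sense of Definition 1.1 and inherits $u_{\alpha,\beta}\ge c\,\delta$, hence $u_{\alpha,\beta}\in C_\delta(\overline{\Omega})^+$.

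Because $u_{\alpha,\beta}\ge c\,\delta$ and $\gamma\in(0,1)$, the right-hand side $\lambda u_{\alpha,\beta}^{-\gamma}$ is bounded in the interior and lies in $L^r(\Omega)$ for every $r<1/\gamma$; the Lieberman-type boundary regularity theory for $(p,q)$-operators (cf.\ \cite{giacomoni2020}) then delivers $u_{\alpha,\beta}\in C^{1,l}(\overline{\Omega})$ for some $l\in(0,1)$. For minimality, any weak solution $v$ of $(P_{\alpha,\beta})$ satisfies $\lambda v^{-\gamma}\ge \lambda(v+1/n)^{-\gamma}$, hence is a super-solution of the $n$-th approximate problem; weak comparison gives $u_n \le v$, and passing to the limit yields $u_{\alpha,\beta}\le v$, identifying $u_{\alpha,\beta}$ as the unique minimal weak solution. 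The main obstacle is the barrier construction of the second step: the non-homogeneity of $\mathcal{L}^{\alpha,\beta}_{p,q}$ prevents scaling a $p$-eigenfunction directly, so $\sigma$ and $\tau$ in $\tau\phi^{\sigma}$ must be jointly tuned so that the $p$- and $q$-terms cooperate with the singular right-hand side both on $\Omega_\nu$ and in the interior; that region-by-region verification is the technical heart of the argument.
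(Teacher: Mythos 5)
Your truncation scheme (regularize with $\lambda(u^+ + 1/n)^{-\gamma}$, pass to the monotone limit) is a genuinely different route from the paper, which directly minimizes the singular energy $\mathcal{J}(u)=\frac{\alpha}{p}\|\nabla u\|_p^p + \frac{\beta}{q}\|\nabla u\|_q^q - \frac{\lambda}{1-\gamma}\int_\Omega|u|^{1-\gamma}$, then establishes G\^ateaux differentiability at the minimizer via strict convexity and \cite[Lemma~3.1]{giacomoni2007}. Your existence and minimality steps are sound in outline. The argument, however, has two substantive gaps in the barrier/regularity part, which is precisely where the paper does its heaviest work.

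First, the lower barrier $\underline{u}=\tau\phi^\sigma$ with $\sigma\in(0,1)$ does not verify the subsolution inequality. Writing out
\[
-\Delta_p(\phi^\sigma) = \sigma^{p-1}(1-\sigma)(p-1)\,\phi^{(\sigma-1)(p-1)-1}|\nabla\phi|^p + \sigma^{p-1}\phi^{(\sigma-1)(p-1)}(-\Delta_p\phi),
\]
the first term is positive for $\sigma<1$ and, since $|\nabla\phi|$ is bounded away from zero near $\partial\Omega$ (Hopf) and $\phi\asymp\delta$, it blows up like $\delta^{\sigma(p-1)-p}$ with exponent $\sigma(p-1)-p<-1$. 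For this to be dominated by $\lambda\,\underline{u}^{-\gamma}\asymp\delta^{-\sigma\gamma}$ one needs $\sigma\ge p/(p-1+\gamma)$, and for $\gamma\in(0,1)$ this threshold is strictly greater than $1$, contradicting your restriction $\sigma\in(0,1)$. A fortiori $\underline{u}$ cannot be a subsolution of the truncated problem, whose right-hand side is bounded. In fact for $\gamma<1$ the natural subsolution is linear ($\sigma=1$), or more simply one observes $u_n\ge u_1$ by monotonicity and gets $u_1\ge c\delta$ from Hopf's lemma applied to the nonsingular $n=1$ problem. Incidentally, the chain $\lambda\,\underline{u}^{-\gamma}\le\lambda(\underline{u}+1/n)^{-\gamma}$ in your text is reversed, since $t\mapsto t^{-\gamma}$ is decreasing; a subsolution of the singular equation is not automatically a subsolution of the regularized one.

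Second, and more importantly, the $C^{1,l}(\overline{\Omega})$ conclusion is not reached by your argument. You deduce from $u\ge c\delta$ that $u^{-\gamma}\in L^r(\Omega)$ for $r<1/\gamma$ and invoke Lieberman-type regularity, but boundary $C^{1,\alpha}$ regularity for these singular problems is not an $L^r$-statement: the result the paper cites, \cite[Theorem~1.7]{giacomoni2020}, requires the two-sided estimate $c_1\delta\le u\le c_2\delta$. The upper bound $u\le C\delta$ is the technical heart of the paper's appendix proof: it constructs the barrier $w=M_\lambda\,\Xi_\tau(\delta(x))$ from the one-dimensional ODE \eqref{eq:bigest}--\eqref{eq:smallest}, verifies in a boundary strip $\Omega_\nu$ that $-\mathcal{L}^{\alpha,\beta}_{p,q}w\ge\frac{M_\lambda^{q-1+\gamma}}{2}\frac{\beta}{w^\gamma}$ weakly, and then applies the weak comparison principle. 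Your proposal omits any upper barrier, so the upper boundary estimate and hence the $C^{1,l}$ regularity are not established.
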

\begin{thm}\label{thm:pair1}
Let $f$ satisfies $(f_0)$-$(f_1)$ and $(f_2)'$. Then, for any $\lambda>0$, there exists a pair of strict weak sub solution and weak super solution $(u_0, u^0)$ of the problem $(P_\lambda).$ In addition, $u_0, u^0 \in C^{1,l}(\overline{\Omega}) \cap C_\delta(\overline{\Omega})^+$ for some $l \in (0,1).$
\end{thm}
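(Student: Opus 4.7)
The plan is to build both members of the pair from the one-parameter family $\{u_{\tau,\tau}\}_{\tau>0}$, where $u_{\alpha,\beta}$ denotes the unique minimal solution of $(P_{\alpha,\beta})$ supplied by Lemma \ref{lem:pre1}. The crucial observation is that $u_{\tau,\tau}$ satisfies
\[ -\mathcal{L}_{p,q} u_{\tau,\tau} \;=\; \frac{\lambda}{\tau\, u_{\tau,\tau}^\gamma} \quad\text{in }\Omega, \]
so the sub and super solution inequalities for $(P_\lambda)$ reduce to the pointwise comparisons $1/\tau<f(u_{\tau,\tau})$ and $1/\tau>f(u_{\tau,\tau})$ respectively. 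Moreover, $u_{\tau,\tau}$ coincides with the unique minimal solution of $(P_{1,1})$ with $\lambda$ replaced by $\lambda/\tau$, so by the weak comparison principle \cite[Proposition 6]{papageorgiou2020} the map $\tau\mapsto u_{\tau,\tau}$ is pointwise decreasing, with $u_{\tau,\tau}\to 0$ in $L^\infty(\Omega)$ as $\tau\to\infty$ and $\|u_{\tau,\tau}\|_\infty\to\infty$ as $\tau\to 0$. I would therefore take $u_0=u_{\tau_1,\tau_1}$ with $\tau_1$ large and $u^0=u_{\tau_2,\tau_2}$ with $0<\tau_2<\tau_1$ small, so that the ordering $u_0\le u^0$ is automatic.

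For the sub solution, the uniform smallness of $u_0$ for $\tau_1$ large, combined with $(f_0)$ (continuity of $f$ and $f(0)>0$), yields $f(u_0(x))\ge f(0)/2$ everywhere on $\overline{\Omega}$; enlarging $\tau_1$ further so that also $1/\tau_1<f(0)/2$ forces the pointwise strict inequality $1/\tau_1<f(u_0)$, which via the equation satisfied by $u_0$ becomes the strict weak sub solution property $-\mathcal{L}_{p,q}u_0<\lambda f(u_0)/u_0^\gamma$.

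For the super solution the key tool is $(f_2)'$: for every $\sigma>0$ there exists $R(\sigma)>0$ with $f(t)\le \sigma t^{q-1+\gamma}$ for $t\ge R(\sigma)$, and combined with monotonicity of $f$ this yields the global bound $f(t)\le \sigma t^{q-1+\gamma}+f(R(\sigma))$ valid for all $t\ge 0$. Coupling this with an a priori estimate of the form
\[ \|u_{\tau,\tau}\|_\infty^{q-1+\gamma} \;\le\; C_0\,\lambda/\tau \qquad \text{for all small }\tau>0 \]
gives $f(u^0)\le \sigma C_0\lambda/\tau_2+f(R(\sigma))$; choosing first $\sigma=1/(4C_0\lambda)$ and then $\tau_2$ so small that $f(R(\sigma))<1/(2\tau_2)$ forces $f(u^0)<1/\tau_2$, hence the strict super solution property. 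The regularity $u_0,u^0\in C^{1,l}(\overline{\Omega})\cap C_\delta(\overline{\Omega})^+$ is inherited directly from Lemma \ref{lem:pre1}, and strictness in the sense needed for Amann's framework follows from the pointwise strict inequalities combined with the comparison principle of \cite[Proposition 6]{papageorgiou2020}. The main technical obstacle is the displayed $L^\infty$ estimate: I would establish it by the rescaling $w=(\tau/\lambda)^{1/(q-1+\gamma)} u_{\tau,\tau}$, which converts the equation into $\mu^{(p-q)/(q-1+\gamma)}(-\Delta_p w)+(-\Delta_q w)=1/w^\gamma$ with $\mu=\lambda/\tau$; since $p<q$ the $-\Delta_p$ coefficient vanishes as $\mu\to\infty$, so $w$ is uniformly dominated by the purely $q$-Laplacian singular problem $-\Delta_q w=1/w^\gamma$, whose uniform $L^\infty$ bound (obtainable by a ball-comparison argument using precisely the sharp constant $C(N,q)$ that already features in the definition of $\mathcal{F}(\vartheta)$) translates back to the required estimate.
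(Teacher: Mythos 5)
Your supersolution is, after a change of parameters, the same object the paper constructs: with $s=(\tau_2/\lambda)^{1/(q-1+\gamma)}$ and $\alpha_*=s^{-1}$, your rescaled function $w=s\,u_{\tau_2,\tau_2}$ solves $-\mathcal{L}_{p,q}^{\alpha_*^{p-q},1}w=1/w^\gamma$ and $u^0=\alpha_* w$, which is precisely the paper's $u^0=\alpha_* u_{\alpha_*^{p-q},1}$ (up to the immaterial factor $2$ on the forcing). The identity $-\mathcal{L}_{p,q}(\alpha_* w)=\alpha_*^{q-1}(-\mathcal{L}_{p,q}^{\alpha_*^{p-q},1}w)$ that makes this work is exactly the computation in the paper's proof. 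Your subsolution, on the other hand, is genuinely different and in fact tidier: the paper solves the nonsingular auxiliary problem $-\mathcal{L}_{p,q}w_\eta=\eta$ with $\eta$ small (so $w_\eta\to 0$ in $C_0^1$ and $(f_0)$ forces $\eta\le\tfrac{\lambda}{2}f(w_\eta)/w_\eta^\gamma$), while you take $u_0=u_{\tau_1,\tau_1}$ from the same singular family and simply use $f\ge f(0)>0$ and $1/\tau_1<f(0)$. Your choice has the added benefit that the ordering $u_0\le u^0$ falls out of the comparison-principle monotonicity of $\tau\mapsto u_{\tau,\tau}$, something the paper does not explicitly address in this theorem.

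The one genuine gap is your justification of the uniform estimate $\|u_{\tau,\tau}\|_{L^\infty}^{q-1+\gamma}\le C_0\lambda/\tau$. The rescaling to $-s^{q-p}\Delta_p w-\Delta_q w=1/w^\gamma$ with $s^{q-p}\to 0$ is correct, but the claim that $w$ is \emph{pointwise dominated} by the solution $\tilde w$ of the pure problem $-\Delta_q\tilde w=1/\tilde w^\gamma$ does not follow from the weak comparison principle: for $\tilde w$ to be a supersolution of the perturbed equation one needs $-\Delta_p\tilde w\ge 0$, and there is no reason a $q$-superharmonic function should be $p$-superharmonic. (Testing the difference of the two weak formulations with $(w-\tilde w)^+$, the extra term $s^{q-p}\int|\nabla w|^{p-2}\nabla w\cdot\nabla(w-\tilde w)^+$ has no sign.) The bound you need is exactly what the paper records as Remark \ref{rem:imp}, namely $\lim_{\alpha\to 0}\|u_{\alpha,1}\|_{L^\infty}<\infty$, which is obtained there by inspecting the proofs of \cite[Theorem 2]{he2008} and \cite[Lemma 3.2]{kumar2020} (a Moser/De Giorgi-type iteration in which the vanishing coefficient on the lower-order $p$-gradient term only helps), not by comparison with the pure $q$-Laplacian problem. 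If you replace your comparison sketch by a citation of Remark \ref{rem:imp} (or reproduce the energy/iteration argument behind it), the rest of your plan goes through.
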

\begin{proof}
First, we construct our subsolution $u_0.$ For $\eta >0$,  let $w_\eta \in C_0^1(\overline{\Omega}) \cap C_\delta(\overline{\Omega})^+$ be the unique weak solution of the problem 
\begin{equation}\label{eq:no-singlar}
   \left\{
         \begin{alignedat}{2} 
             {} - \mathcal{L}_{p,q} w_\eta
             & {}= \eta, \ w_\eta>0
             && \quad\mbox{ in } \, \Omega, \\
             w_\eta & {}= 0
             && \quad\mbox{ on } \partial\Omega.
          \end{alignedat}
     \right.
\end{equation}
The existence of the unique solution $w_\eta$ for $\eta \in (0,1)$ can be proved via \cite[Proposition 3.1 ]{papageorgiou2020} such that $w_\eta \in C_\delta(\overline{\Omega})^+$ and $w_\eta \to 0$ in $C_0^1(\overline{\Omega})$ as $\eta \to 0^+.$ 
Since $(f_0)$ holds, we can choose $\eta= \eta(\lambda)$ small enough such that $\eta$ and $w_\eta$ satisfies
$$    \eta \leq \frac{\lambda}{2} \frac{f(w_\eta)}{w_\eta^\gamma} \ \text{in} \ \Omega.$$
Hence, by defining $u_0:= w_\eta$ as first strict weak sub solution, we obtain, $u_0$ satisfies 
$$- \mathcal{L}_{p,q} u_0 \leq \frac{\lambda}{2} \frac{f(u_0)}{u_0^\gamma} < \lambda \frac{f(u_0)}{u_0^\gamma} - \chi_{\#} \quad \text{weakly in} \ \Omega$$
where $0< \chi_{\#} \leq \frac{\lambda}{2} \min_{x \in \Omega} f(w_\eta)(x) w_\eta^{-\gamma}(x)$.\vspace{0.1cm}\\
For the construction of supersolution $u^0$, let $u_{\alpha}:= u_{\alpha,1}$ be the solution of the problem $$- \mathcal{L}_{p,q}^{\alpha,1} u_\alpha = \frac{2}{u_\alpha^\gamma} , u_\alpha >0 \ \text{in} \ \Omega \ \text{and} \ u_\alpha=0 \ \text{on} \ \partial \Omega$$
for some $\alpha>0$. From Remark \ref{rem:imp} we know that $\|u_\alpha\|_{L^\infty(\Omega)} \leq C(\alpha)$ and $\lim_{\alpha \to 0} C(\alpha) < \infty$, and $f$ satisfy $(f_2)'$, so we choose $\alpha= {\alpha_*}^{p-q}$ for $\alpha_* >0$ large enough such that
\begin{equation}\label{cond:small:alpha}
\frac{f(\alpha_* \|u_{\alpha}\|_{L^\infty(\Omega)})}{(\alpha_* \|u_{\alpha}\|_{L^\infty(\Omega)})^{q+\gamma-1}} \leq \frac{1}{\lambda \|u_{\alpha}\|_{L^\infty(\Omega)}^{q+\gamma-1}}.   \end{equation}
Define $u^0:= {\alpha_*} u_{\alpha}.$ Then,
\begin{equation*}
    \begin{split}
       - \mathcal{L}_{p,q} u^0 &= -\div(|\nabla u^0|^{p-2} \nabla u^0 + |\nabla u^0|^{q-2} \nabla u^0) \\
       &= {\alpha_*}^{p-1} (-\div(|\nabla u_{\alpha}|^{p-2} \nabla u_{\alpha})) + {\alpha_*}^{q-1} (-\div(|\nabla u_{\alpha}|^{q-2} \nabla u_{\alpha})) \\
       & = {\alpha_*}^{q-1} \left( \alpha_*^{p-q} (-\div(|\nabla u_{\alpha}|^{p-2} \nabla u_{\alpha})) + (-\div(|\nabla u_{\alpha}|^{q-2} \nabla u_{\alpha})) \right) \\
       & \geq  {\alpha_*}^{q-1} (- \mathcal{L}^{\alpha,1}_{p,q} u_{\alpha}) \geq \frac{2{\alpha_*}^{q-1}}{ u_{\alpha}^\gamma} =  \frac{2{\alpha_*}^{q-1+\gamma}}{(u^0)^\gamma}.
    \end{split}
\end{equation*}
Since $(f1)$-$(f2)'$ holds and $\alpha_*$ satisfies \eqref{cond:small:alpha}, therefore we get
\begin{equation*}
  \mathcal{L}_{p,q} u^0 \geq  \frac{
  {\alpha_*}^{q-1+\gamma}}{(u^0)^\gamma} + \chi^{\#} \geq  \frac{\lambda f^*({\alpha_*} \|u_{\alpha}\|_{L^\infty(\Omega)})}{(u^0)^\gamma} + \chi^{\#} \geq \lambda \frac{f(u^0)}{(u^0)^\gamma} + \chi^{\#} \ \text{in} \ \Omega.  
\end{equation*}
where $0< \chi^{\#} \leq \alpha_*^{q-1} \min_{x \in \Omega} u_{\alpha}^{-\gamma}(x).$
\end{proof}
\noindent Denote $R$ be the radius of the ball $B(0,R)$ inscribed in $\Omega$ with $R \leq 1 + \frac{N}{q-1}$ and let $\vartheta^* \in (0, \vartheta_1]$ such that $\overline{f}(\vartheta^*)= \min_{0< t \leq \vartheta^*} \frac{f(t)}{2t^\gamma}$ and define $h \in C([0,\infty)$ given by
\begin{equation*}
 \begin{aligned}
 h(t)= \left\{
 \begin{array}{ll}
  \overline{f}(\vartheta^*) &  \text{ if } \ t \leq \vartheta^* , \\
 \frac{f(t)}{2 t^\gamma}
             & \text{ if } \ t \geq \vartheta_1,\\
 \end{array} 
 \right.
 \end{aligned}
 \end{equation*}
 so that the function $h$ is nondecreasing on $[0, \vartheta_1]$ and $h(t) \leq \frac{f(t)}{2t^\gamma}$ for $t \geq 0.$ Set $\epsilon:= \frac{NR}{N+q-1 }.$
\begin{thm}
There exists $\lambda_*, \lambda^*>0$ and a function $\phi \in C^1(\overline{B(0,R)})$ such that for every $\lambda \in [\lambda_*, \lambda^*]$ the function $\phi$ satisfies
\begin{equation}\label{con}
 \left\{
         \begin{alignedat}{2} 
             {} \ - \mathcal{L}_{p,q} \phi(x)
             & {}\leq  \lambda h(\phi), \ \phi>0
             && \quad\mbox{ in } \, B(0,R), \\
             \phi& {}= 0
             && \quad\mbox{ on } \partial B(0,R),
          \end{alignedat}
     \right. \quad  \text{and} \  \|\phi\|_{L^\infty(B(0,R))} \in [\vartheta_1, \vartheta_2].    
\end{equation}
\end{thm}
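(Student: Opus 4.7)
The strategy is to exploit radial symmetry and build a piecewise--defined radial $C^1$ candidate $\phi(r)$ on $\overline{B(0,R)}$: set $\phi \equiv \vartheta_2$ on the inner ball $\overline{B(0,\epsilon)}$, and let $\phi$ decay to $0$ on the annulus $\{\epsilon \le |x| \le R\}$ via an explicit ansatz. On $\overline{B(0,\epsilon)}$ one has $-\mathcal{L}_{p,q}\phi \equiv 0$, so the target inequality is trivial there and all the work is confined to the annulus, where the non-homogeneity of $\mathcal{L}_{p,q}$ must be tamed.

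For the decaying part I would take
\[
\phi(r) = \vartheta_2\left(1 - \left(\tfrac{r-\epsilon}{R-\epsilon}\right)^{p/(p-1)}\right), \qquad \epsilon \le r \le R.
\]
The exponent $p/(p-1)$ is the smallest value for which \emph{both} $-\Delta_p\phi$ and $-\Delta_q\phi$ remain pointwise bounded at $r=\epsilon$: indeed $\phi'(r)\propto (r-\epsilon)^{1/(p-1)}$, so $|\phi'|^{p-1}\propto (r-\epsilon)$ produces a bounded radial derivative, and $|\phi'|^{q-1}\propto (r-\epsilon)^{(q-1)/(p-1)}$ with exponent greater than one produces another bounded derivative. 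The more tempting exponent $q/(q-1)$, which makes $-\Delta_q\phi$ constant, would create an integrable but non-bounded singularity in $-\Delta_p\phi$ at $r=\epsilon$, defeating any pointwise comparison with $\lambda h(\phi)$.

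Setting $A:=\vartheta_2 p/((p-1)(R-\epsilon)^{p/(p-1)})$, a direct radial-ODE calculation gives
\[
-\Delta_p\phi(r) = A^{p-1}\bigl(N-(N-1)\epsilon/r\bigr),
\]
with an analogous explicit expression for $-\Delta_q\phi$; both terms vanish at $r=\epsilon$ and attain their maxima at $r=R$. The choice $\epsilon=NR/(N+q-1)$ collapses $N-(N-1)\epsilon/R$ to $Nq/(N+q-1)$ and $R-\epsilon$ to $R(q-1)/(N+q-1)$, and after substitution the combined power combinations assemble into the constant $C(N,q)=((N+q-1)^{N+q-1}/N^N)^{1/(q-1)}$ appearing in the statement. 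Since $0\le \phi \le \vartheta_2$ and $h$ is nondecreasing on $[0,\vartheta_1]$ (by construction) and on $[\vartheta_1,\vartheta_2]$ (by $(f_3)$), we have $h(\phi)\ge h(0)=\overline{f}(\vartheta^*)$ throughout $\overline{B(0,R)}$, so the target inequality reduces to a single scalar estimate of the shape $K_p\vartheta_2^{p-1}R^{-p}+K_q\vartheta_2^{q-1}R^{-q}\le \lambda\,\overline{f}(\vartheta^*)$.

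From this the parameters fall out: the choice $\|\phi\|_{L^\infty}=\vartheta_2\in[\vartheta_1,\vartheta_2]$ is automatic, and $\lambda_*$ is the smallest $\lambda$ making the scalar estimate above hold. The upper threshold $\lambda^*$ is then determined by compatibility with the first pair of strict sub/super solutions already built in Theorem~\ref{thm:pair1} (where condition \eqref{cond:small:alpha} plays the symmetric role), so that both pairs coexist over $[\lambda_*,\lambda^*]$; nonemptiness of this interval is precisely the algebraic content of the hypothesis $\vartheta_1<\mathcal{F}(\vartheta_2)$. The main obstacle throughout is the non-homogeneity of $\mathcal{L}_{p,q}$: the $p$- and $q$-parts scale independently in $\vartheta_2$ and in $R$, so one cannot rescale a single torsion function to absorb both. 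The compromise exponent $p/(p-1)$ trades away the cleanest $q$-estimate for pointwise control of the full operator, and reconciling the resulting constants with $C(N,q)$ and $\mathcal{F}(\vartheta_2)$ is the delicate arithmetic at the heart of the argument.
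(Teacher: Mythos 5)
Your approach is genuinely different from the paper's. The paper never differentiates an explicit profile; instead it builds a ``dummy'' shape function $v(r)=\vartheta\Upsilon(r)$ with plateau level $\vartheta\in(\vartheta_1,\min\{\vartheta_2,\mathcal{F}(\vartheta_2)\})$ (strictly less than $\vartheta_2$), solves the \emph{decoupled} radial problem $-\mathcal{L}_{p,q}\phi=\lambda h(v(|x|))$ by inverting $\mathbb{L}_{p,q}$ and integrating twice, and then proves $\phi\ge v$ (forcing $\|\phi\|_\infty\ge\vartheta_1$ and, via monotonicity of $h$, the subsolution inequality) together with $\|\phi\|_\infty\le\vartheta_2$. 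Crucially, in the paper $\phi=\phi_\lambda$ depends on $\lambda$: the lower threshold $\lambda_*$ forces $\phi_\lambda\ge v$ and the upper threshold $\lambda^*$ forces $\|\phi_\lambda\|_\infty\le\vartheta_2$, and after the quantity $h(\vartheta)$ cancels between $\lambda_*$ and $\lambda^*$, the inequality $\lambda_*<\lambda^*$ becomes exactly the arithmetic condition $\vartheta<\min\{\vartheta_2,\mathcal{F}(\vartheta_2)\}$ of $(f_3)$. Your direct construction is more elementary and avoids the inverse $\mathbb{L}_{p,q}^{-1}$, which is a real gain; but it also erases the $\lambda^*$ constraint (your $\phi$ is $\lambda$-independent, so $\|\phi\|_\infty=\vartheta_2$ holds trivially), and therefore it no longer explains why $\mathcal{F}(\vartheta_2)$ and the plateau level $\vartheta<\vartheta_2$ appear in the hypotheses.

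Beyond this structural difference there are three concrete gaps. First, the claim that ``both terms vanish at $r=\epsilon$'' is false: with $\phi'(r)=-A(r-\epsilon)^{1/(p-1)}$ one has $|\phi'|^{p-2}\phi'=-A^{p-1}(r-\epsilon)$, hence $-\Delta_p\phi=A^{p-1}\bigl(N-(N-1)\epsilon/r\bigr)$ equals $A^{p-1}\neq 0$ at $r=\epsilon$; only the $q$-part vanishes there because $(q-1)/(p-1)>1$. This does not break the estimate (the maximum is still at $r=R$), but it should be stated correctly. Second, the assertion that the constants ``assemble into $C(N,q)$'' is not demonstrated: with the plateau at $\vartheta_2$ and the exponent $p/(p-1)$ the $p$- and $q$-parts scale as $\vartheta_2^{p-1}(R-\epsilon)^{-p}$ and $\vartheta_2^{q-1}(R-\epsilon)^{-q}$ respectively, and these do \emph{not} combine into the single quantity $(q\vartheta_2/C(N,q))^{q-1}$ that appears in $\mathcal{F}$; you would need to carry out the arithmetic and state what $\lambda_*$ actually is. Third, and most importantly, your $\lambda^*$ is unaccounted for: you write that $\lambda^*$ ``is determined by compatibility with the first pair\ldots built in Theorem~\ref{thm:pair1},'' but that theorem holds for \emph{every} $\lambda>0$ and produces no upper threshold. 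The statement you are proving asserts the existence of a pair $\lambda_*<\lambda^*$, and with your $\lambda$-independent $\phi$ any $\lambda^*>\lambda_*$ works trivially; you must either say that explicitly (and acknowledge that the meaningful $\lambda^*$ is then imposed only by the second super-solution $v^0$ in the subsequent theorem, via the constraints on $m_\lambda$), or reproduce a genuine upper bound as the paper does. As written, the proposal gives the right kind of subsolution but leaves the $[\lambda_*,\lambda^*]$ structure, and with it the role of $(f_3)$, unjustified.
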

\begin{proof}
 For $\chi, \kappa >1$, define $\Upsilon: [0,R] \to [0,1]$ by 
\begin{equation*}
 \begin{aligned}
 \Upsilon(r)=  \left\{
 \begin{array}{ll}
1 &  \text{ if } \ r \in [0, \epsilon], \\
1- \left(1-\left(\frac{R-r}{R-\epsilon}\right)^\kappa\right)^\chi
             & \text{ if } \ r \in (\epsilon, R].\\
 \end{array} 
 \right.
 \end{aligned}
 \end{equation*}
 Let $\vartheta \in (\vartheta_1, \min\{\vartheta_2, \mathcal{F}(\vartheta_2)\})$ (see $(f_3)$) and define $v(r)= \vartheta \Upsilon(r)$ such that $|v'(r)| \leq \vartheta \frac{\chi \kappa}{R-\epsilon}$. Now we prove that there exists a radially symmetric solution $\Phi \in C^1(B(0,R))$ of the following problem:
  \begin{equation*}
    (P_{sym})\left\{
         \begin{alignedat}{2} 
             {} \ - \mathcal{L}_{p,q} \phi(x)
             & {}= \lambda h(v(|x|)), \ \phi>0
             && \quad\mbox{ in } \, B(0,R), \\
             \phi& {}= 0
             && \quad\mbox{ on } \partial B(0,R).
          \end{alignedat}
     \right.
\end{equation*}
From elementary calculations, it is easy to see that $\phi(|x|)$ is radially symmetric solution of $(P_{sym})$ iff $\Phi(r):= \phi(|x|)$ for $|x|=r$ is the solution of the following equivalent problem:  
  \begin{equation*}
    (P'_{sym})\left\{
         \begin{alignedat}{2} 
             {} \ -( r^{N-1} \mathbb{L}_{p,q} (\Phi'(r))'
             & {}= \lambda r^{N-1} h(v(r)), \ \Phi>0
             && \quad\mbox{ in } \, r \in [0,R), \\
             \Phi'(0)& {}= 0,\ \ \Phi(R)=0.
          \end{alignedat}
     \right.
\end{equation*}
where $\mathbb{L}_{p,q}(t)= |t|^{p-2} t + |t|^{q-2} t.$ By integrating from $0$ to $r<R$, we get
$$- r^{N-1} \ \mathbb{L}_{p,q} (\Phi'(r)) = \lambda \int_0^r t^{N-1} h(v(t)) ~dt.$$
Using intermediate value theorem, we get the nonhomogeneous function $\mathbb{L}_{p,q}$ is bijective, monotone and continuous, therefore $\mathbb{L}_{p,q}^{-1}$ is well defined and continuous. Hence, we have
\begin{equation}\label{def:deri}
  - \Phi'(r) =  \mathbb{L}_{p,q}^{-1} \left(\frac{\lambda}{r^{N-1}} \int_0^r t^{N-1} h(v(t)) ~dt\right).  
\end{equation}
Now, by again integrating, we get
\begin{equation}\label{solu}
 \Phi(r) = \Phi(R) + \int_{r}^R \mathbb{L}_{p,q}^{-1} \left(\frac{\lambda}{s^{N-1}} \int_0^s t^{N-1} h(v(t)) ~dt\right) ~ds.   
\end{equation}
By taking $\Phi(R)=0$ and using the fact that $$\Phi'(0) = \lim_{r \to 0} \Phi'(r) = - \lim_{r \to 0} \mathbb{L}_{p,q}^{-1} \left(\frac{\lambda}{r^{N-1}}  h(\vartheta) \int_0^r t^{N-1} ~dt\right)= - \lim_{r \to 0} \mathbb{L}_{p,q}^{-1} \left(\frac{\lambda h(\vartheta) }{N} r \right)=0$$ implies $\Phi$ defined in \eqref{solu} is the solution of the problem $(P'_{sym})$ and $\phi(x):= \Phi(|x|)$ is the solution of $(P_{sym}).$ Now, we claim that there exists $\lambda_*, \lambda^* >0$ such that for all $\lambda \in [\lambda_*, \lambda^*]$ the following holds
\begin{equation}\label{claim}
   \phi(r) \geq v(r) \ \text{for all}\ r \in [0,R] \ \text{ and} \ \|\phi\|_{L^\infty(\Omega)} \leq \vartheta_2 
\end{equation}
and which further implies that $\phi$ is the subsolution of the nonsingular problem \eqref{con} since $h$ is nondecreasing function in $[0, \vartheta_2].$ We observe that $\phi(R)= v(R)=0$, so in order to prove our claim \eqref{claim} it is enough to prove that
$$\phi'(r) \leq v'(r) \ \text{for every} \ r \in [0,R].$$
For $r \in [0, \epsilon]$, $v'(r)=0$ and $\phi'(r) \leq 0$, so the claim holds and for $r \in [\epsilon, R]$ we have
\begin{equation*}
    \begin{split}
        - \phi'(r) &=  \mathbb{L}_{p,q}^{-1} \left(\frac{\lambda}{r^{N-1}} \int_0^r t^{N-1} h(v(t)) ~dt\right) \geq \mathbb{L}_{p,q}^{-1} \left(\frac{\lambda}{R^{N-1}} \int_0^\epsilon t^{N-1} h(v(t)) ~dt\right)\\
        &=  \mathbb{L}_{p,q}^{-1} \left(\frac{\lambda h(\vartheta)}{R^{N-1}} \int_0^\epsilon t^{N-1}  ~dt\right) =  \mathbb{L}_{p,q}^{-1} \left(\frac{\lambda h(\vartheta)}{R^{N-1}} \frac{\epsilon^N}{N}\right).
    \end{split}
\end{equation*}
From the definition of the function $v$ and by the choice of $R$ and $\epsilon$, we get $$|v'(r)| \leq \frac{\vartheta \chi \kappa}{R-\epsilon} \ \text{and} \ \frac{2 (\chi \kappa)^{q-1}}{(R-\epsilon)^{q-1}\epsilon^N} \geq \frac{1}{\epsilon^N}\mathbb{L}_{p,q} \left( \frac{\chi \kappa}{R-\epsilon}\right).$$ Hence, if we choose $\chi, \kappa$ close to $1$ and $\lambda$ such that $$\lambda \geq \lambda_*:= \max\{\vartheta^{p-1}, \vartheta^{q-1}\}  \frac{2 R^{N-1} N }{(R-\epsilon)^{q-1}\epsilon^N h(\vartheta)}$$
we get $-\phi'(r) \geq - v'(r)$ and therefore the claim holds. Now, in order to prove the $L^\infty$ bound of the function $\phi$ in \eqref{claim}, we integrate the equation \eqref{def:deri} for any $s \in [0,R].$ Since $\vartheta \in (\vartheta_1, \vartheta_2)$ and the function $h$ in increasing in $[0, \vartheta_2]$, therefore we have 
\begin{equation*}
\begin{split}
     \phi(s) = \int_s^R \mathbb{L}_{p,q}^{-1} \left(\frac{\lambda}{r^{N-1}} \int_0^r t^{N-1} h(v(t)) ~dt\right) ~dr &\leq \int_s^R \mathbb{L}_{p,q}^{-1} \left(\frac{\lambda h(\vartheta)}{r^{N-1}} \int_0^r t^{N-1} ~dt\right) ~dr \\
     &= \int_{s}^R \mathbb{L}_{p,q}^{-1} \left(\frac{\lambda h(\vartheta)}{N} r \right) ~dr.
\end{split}
\end{equation*}
Now, by choosing $\lambda$ such that $$\lambda \leq \lambda^*:= \left(\frac{\vartheta_2 q}{q-1} \right)^{q-1} \frac{N}{h(\vartheta) R^q}$$ and using the fact that $\mathbb{L}_{p,q}^{-1}(z) \leq z^{\frac{1}{q-1}}$ for $z \geq 0$, we get
\begin{equation*}
\begin{split}
    |\phi(s)| &\leq \int_s^R \left|\mathbb{L}_{p,q}^{-1} \left(\frac{\lambda h(\vartheta)}{N} r \right)\right| ~dr \leq \left(\frac{\lambda h(\vartheta)}{N} \right)^{\frac{1}{q-1}} \int_s^R r^{\frac{1}{q-1}} ~dr\\
    & \leq \left(\frac{\lambda h(\vartheta)}{N} \right)^{\frac{1}{q-1}} \int_0^R r^{\frac{1}{q-1}} ~dr \leq \left(\frac{\lambda h(\vartheta)}{N} \right)^{\frac{1}{q-1}} \left(\frac{q-1}{q}\right) R^{\frac{q}{q-1}}  \leq \vartheta_2.
\end{split}
\end{equation*}
\end{proof}
\begin{remark}
For $\epsilon= \frac{NR}{N+q-1}$, the condition $
      \vartheta \leq \min\{\vartheta_2,  \mathcal{F}(\vartheta_2)\}$ implies $\lambda_* < \lambda^*.$ Therefore, the set $(\lambda_*, \lambda^*)$ is nonempty.
\end{remark}
\begin{thm}
Let $f$ satisfies $(f_0)$-$(f_3)$. Then for any $\lambda \in [\lambda_*, \lambda^*]$, there exists a second pair of strict weak sub solution and weak super solution $(v_0, v^0)$ of the problem $(P_\lambda).$ Moreover, $v_0, v^0 \in C^{1,l}(\overline{\Omega}) \cap C_\delta(\overline{\Omega})^+$ for some $l \in (0,1)$ and $$u_0 \leq v_0 < u^0,\ u_0 < v^0 \leq u^0 \ \text{and} \ v_0 \not\leq v^0.$$
\end{thm}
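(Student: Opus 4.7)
The plan is to build the second pair so that $v_0$ carries a bump of height $\vartheta \in (\vartheta_1,\vartheta_2)$ inside $B(0,R) \Subset \Omega$, while $v^0$ is a strict super solution whose value on $B(0,R)$ is strictly less than $\vartheta$; the required incomparability $v_0 \not\leq v^0$ then follows at once, and the remaining orderings reduce to sup-norm comparisons with $(u_0, u^0)$ from Theorem \ref{thm:pair1}.

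For $v_0$ I would extend the radial bump $\phi$ produced by the preceding theorem from $B(0,R)$ to all of $\Omega$. Either one uses the ODE continuation of Ko et al.\ to continue $\phi$ as a radial function beyond $B(0,R)$, or one extends by zero and checks that the resulting $\widetilde{\phi} \in W_0^{1,q}(\Omega)$ satisfies the weak sub solution inequality of $(P_\lambda)$ across $\partial B(0,R)$ (which is feasible because $\phi'(R) < 0$ rules out an adverse distributional spike in $-\mathcal{L}_{p,q}\widetilde\phi$). I would then set $v_0 := \max\{\widetilde{\phi},\, u_0\}$, so that $u_0 \leq v_0$ automatically. Since the pointwise maximum of two weak sub solutions is a weak sub solution, and since $\widetilde\phi$ satisfies the sub solution inequality strictly (because $h(t) \leq f(t)/(2t^\gamma) < f(t)/t^\gamma$ for $t > 0$) as does $u_0$ by Theorem \ref{thm:pair1}, $v_0$ is a strict weak sub solution. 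The regularity $v_0 \in C^{1,l}(\overline{\Omega})\cap C_\delta(\overline{\Omega})^+$ is inherited from that of $\phi$ and $u_0$.

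For $v^0$, mirroring the construction of $u^0$ in Theorem \ref{thm:pair1}, I take $v^0 := \beta_* u_{\beta_*^{p-q},1}$ for a parameter $\beta_*>0$ to be chosen. Exactly as in that theorem's computation,
\[
  -\mathcal{L}_{p,q}\,v^0 = \frac{2\beta_*^{q-1+\gamma}}{(v^0)^\gamma},
\]
so $v^0$ is a strict super solution of $(P_\lambda)$ provided $2\beta_*^{q-1+\gamma} \geq \lambda f(\|v^0\|_\infty)$ with positive slack. Using $(f_2)'$ and continuity of $f$, together with the behavior of $\alpha \mapsto \|u_{\alpha,1}\|_\infty$ described in Lemma \ref{lem:pre1}, I would choose $\beta_* \in (0, \alpha_*)$ large enough that the super solution inequality holds strictly but small enough that $\|v^0\|_\infty < \vartheta$ and $v^0 \leq u^0$; the positive lower bound $v^0 \geq c\,\delta$ of the $C_\delta^+$ class will simultaneously give $v^0 \geq u_0$ after possibly further enlarging $\alpha_*$ in $u^0$.

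The orderings $u_0 \leq v_0 < u^0$ and $u_0 < v^0 \leq u^0$ are then immediate from the above sup-norm and $C_\delta^+$ bounds (with strictness of $v_0 < u^0$ coming from $\|v_0\|_\infty \leq \max\{\vartheta_2, \|u_0\|_\infty\}$ and the largeness of $\alpha_*$), while $v_0 \not\leq v^0$ follows from $v_0(0) \geq \vartheta > v^0(0)$. I expect the main obstacle to be the calibration of $\beta_*$ in the $v^0$ step: because $f(0)>0$, the singular term $1/(v^0)^\gamma$ prevents arbitrarily small super solutions, so $\beta_*$ must be tuned delicately to satisfy both the super solution inequality and $\|v^0\|_\infty < \vartheta$ simultaneously. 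This is where the explicit range $\lambda \in [\lambda_*,\lambda^*]$ from the previous theorem becomes essential, and a careful monotonicity/continuity analysis of $\beta_* \mapsto \|v^0\|_\infty$ and $\beta_* \mapsto f(\|v^0\|_\infty)/\beta_*^{q-1+\gamma}$ is what ultimately forces the construction to go through. A secondary technical point is the distributional verification that the extension $\widetilde\phi$ remains a weak sub solution on all of $\Omega$, which is the place where either the ODE extension of \cite{Ko} or a direct computation exploiting $\phi'(R) < 0$ enters.
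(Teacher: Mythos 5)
Your overall strategy is exactly the paper's: produce a sub solution $v_0$ that carries a bump of height close to $\vartheta\in(\vartheta_1,\vartheta_2)$ inside $B(0,R)$, and a super solution $v^0$ with $\|v^0\|_\infty\leq\vartheta_1$, so that $v_0\not\leq v^0$ and the other orderings with $(u_0,u^0)$ are essentially sup-norm comparisons. Two of your concrete steps, however, diverge from the paper in ways that leave genuine gaps.

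First, for $v_0$ you take $v_0:=\max\{\widetilde\phi,u_0\}$. This does give $v_0\in C_\delta(\overline{\Omega})^+\cap W_0^{1,q}(\Omega)$ and is a weak sub solution, but the pointwise maximum of two $C^{1,l}$ functions is generically only Lipschitz across the contact set $\{\widetilde\phi=u_0\}$, so the claimed regularity $v_0\in C^{1,l}(\overline{\Omega})$ fails. The theorem needs this regularity (and $C_\delta^+$) because the Amann fixed-point argument is run in $C_\delta(\overline{\Omega})$ and invokes the strong comparison principle from \cite[Proposition 6]{papageorgiou2020} for $C^1$ sub/super solutions. The paper avoids the problem by not taking a maximum at all: it extends $\phi$ by zero to $\zeta\in W_0^{1,q}(\Omega)$ and then performs one Picard step, solving the auxiliary monotone problem $-\mathcal{L}_{p,q}\psi+\Theta_\lambda\mathbb{L}_{p,q}(\psi)=\lambda h(\zeta)+\Theta_\lambda\mathbb{L}_{p,q}(\zeta)$, taking $v_0:=\psi$. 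This iterate is $\geq\zeta$ by monotonicity, is a strict sub solution because $h\leq f/(2t^\gamma)$, and, crucially, inherits $C^{1,k}(\overline\Omega)\cap C_\delta(\overline\Omega)^+$ from elliptic regularity (Ladyzhenskaya--Ural'tseva, Lieberman, Pucci--Serrin). Your alternative suggestion of an ODE continuation beyond $B(0,R)$ is also not what the paper does; the zero extension followed by the regularizing iterate is the mechanism.

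Second, for $v^0$ you replicate the scaling used for $u^0$, namely $v^0=\beta_*\,u_{\beta_*^{p-q},1}$, which yields the super solution inequality $2\beta_*^{q-1+\gamma}\geq\lambda f(\|v^0\|_\infty)$. The paper instead transposes the roles of the two parameters and sets $v^0=m_\lambda u_{1,\beta^*}$ with $\beta^*=m_\lambda^{q-p}$, so that the \emph{$p$-term} survives the scaling and the requirement becomes $m_\lambda^{p-1+\gamma}\geq\lambda f(m_\lambda C(\beta^*))$. This is not a cosmetic change: the exponent $p-1+\gamma<q-1+\gamma$ is precisely the one matched to assumption $(f_2)$ (which is in force here, as opposed to the weaker $(f_2)'$ used for $u^0$), and for a super solution that must be \emph{small} in sup norm it is the $p$-exponent that makes the two constraints compatible. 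Moreover, your calibration argument for $\beta_*$ relies on controlling $\|u_{\alpha,1}\|_\infty$ as $\alpha\to\infty$, but Remark \ref{rem:imp} only gives information in the regime $\alpha\to 0$ (with $\beta$ fixed), so the asymptotics you invoke are not available from the paper's lemmas. Swapping to the $u_{1,\beta}$ scaling, as the paper does, is what makes the construction close.

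A minor omission: you should also record, as the paper does, the explicit strictness constants ($\varepsilon_*$, $\varepsilon^*$) so that the sub/super solution inequalities hold with a uniform gap; this is needed later to derive the strict invariance $\hat T(X_i)\subset X_i$ in the Amann argument.
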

\begin{proof}
First, we construct the supersolution $v^0$ such that $\|v^0\|_{L^\infty(\Omega)} \leq \vartheta_1$ (see $(f_3)$). Let $u_\beta:= u_{1,\beta}$ is the solution of
$$- \mathcal{L}_{p,q}^{1,\beta} u_\beta = \frac{2}{u_\beta^\gamma} , u_\beta >0 \ \text{in} \ \Omega \ \text{and} \ u_\beta=0  \ \text{on} \ \partial \Omega.$$
From Remark \ref{rem:imp}, we know that $\|u_\beta\|_{L^\infty(\Omega)} \leq C(\beta)$ and $\lim_{\beta \to 0} C(\beta) < \infty$, and since $(f_2)$ holds, so we choose $\beta= \beta^*:= m_{\lambda}^{q-p}$ small enough such that 
\begin{equation}\label{eq:cond:lam:alp}
  m_\lambda \|u_{\beta^*}\|_{L^\infty(\Omega)} \leq \vartheta_1 \quad \text{and} \quad m_\lambda^{p-1+ \gamma} \geq \lambda f(m_\lambda C(\beta^*)).  
\end{equation}  
Define $$v^0(x)= m_{\lambda} u_{\beta^*} \quad \text{where} \ m_{\lambda} \ \text{satisfies} \ \eqref{eq:cond:lam:alp}.$$
Then by using $(f_1)$-$(f_2)$, we obtain
\begin{equation*}
    \begin{split}
       - \mathcal{L}_{p,q} v^0 &= -\div(|\nabla v^0|^{p-2} \nabla v^0 + |\nabla v^0|^{q-2} \nabla v^0) \\
       &= m_\lambda^{p-1} (-\div(|\nabla u_{\beta^*}|^{p-2} \nabla u_{\beta^*})) + m_\lambda^{q-1} (-\div(|\nabla u_{\beta^*}|^{q-2} \nabla u_{\beta^*})) \\
       & \geq m_\lambda^{p-1} (- \mathcal{L}^{1, \beta^*}_{q,p} u_{\beta^*}) \geq m_\lambda^{p-1}\frac{2}{u_{\beta^*}^\gamma} \geq \lambda \frac{f(m_\lambda \|u_{\beta^*}\|_{L^\infty(\Omega)})}{(v^0)^\gamma} + \frac{m_\lambda^{p-1}}{u_{\beta^*}^\gamma} \geq \lambda \frac{f(v^0)}{(v^0)^\gamma} + \varepsilon^*
    \end{split}
\end{equation*}
where $0 < \varepsilon^* \leq m_\lambda^{p-1} \min_{x \in \Omega} u_{\beta^*}^{-\gamma}.$\\
Now, to construct the second subsolution $v_0$, we consider the following nonsingular problem in $\Omega$:
 \begin{equation*}
    (P_{NS})\left\{
         \begin{alignedat}{2} 
             {} \ - \mathcal{L}_{p,q} u
             & {}= \lambda h(u), \ u>0
             && \quad\mbox{ in } \, \Omega, \\
             u& {}= 0
             && \quad\mbox{ on } \partial\Omega.
          \end{alignedat}
     \right.
\end{equation*}
Let $R$ be the radius of the ball $B(0,R)$ inscribed in $\Omega$ and the function $\phi$ satisfies \eqref{con}. Herewith, we extend the function $\phi$ in $\mathbb{R}^N \setminus B(0,R)$ and define 
\begin{equation*}
 \begin{aligned}
 \zeta(x)=  \left\{
 \begin{array}{ll}
\phi(x) &  \text{ if } \ x \in B(0,R), \\
0
             & \text{ otherwise}.\\
 \end{array} 
 \right.
 \end{aligned}
 \end{equation*}
 such that $\zeta \in W_0^{1,q}(\Omega) \cap C^1(\Omega)$  and $\zeta$ satisfies $- \mathcal{L}_{p,q} \zeta \leq \lambda h(\zeta)$ in $\Omega$ and $\zeta=0$ on $\partial \Omega.$ In order to obtain  the strictly positive solution of the nonsingular problem $(P_{NS})$, we iterate the subsolution $\zeta$ in the following way. Let $\psi$ be a weak solution of the following problem
 \begin{equation*}
 (P_{p,q})\left\{
         \begin{alignedat}{2} 
             {} \ - \mathcal{L}_{p,q} \psi + \Theta_\lambda \mathbb{L}_{p,q}(\psi)
             & {} =  g(\zeta),              && \quad\mbox{ in } \, \Omega, \\
             \psi& {}= 0
             && \quad\mbox{ on } \partial \Omega
          \end{alignedat}
     \right.    
\end{equation*}
where $g(t)= \lambda h(t)+ \Theta_\lambda \mathbb{L}_{p,q}(t)$ where $\Theta_\lambda$ is chosen in such a way that $g$ is an increasing function for all $t \geq 0.$ The existence of weak solution of the problem $(P_{p,q})$ can be proved by finding the minimizer of the energy functional $\mathcal{J}_\Theta$ defined on $W_0^{1,q}(\Omega)$ as
$$\mathcal{J}_\Theta(\psi) =  \int_{\Omega} \left(\frac{|\nabla \psi|^p}{p} + \frac{|\nabla \psi|^q}{q}\right) ~dx + \Theta_\lambda \int_{\Omega} \left(\frac{|\psi|^p}{p} + \frac{|\psi|^q}{q}\right) ~dx - \int_{\Omega} g(\zeta) \psi ~dx.$$
Since $g(z) \in L^\infty(\Omega)$, and $W_0^{1,q}(\Omega) \hookrightarrow W_0^{1,p}(\Omega) \cap L^{q}(\Omega)$, $\mathcal{J}_\Theta$ is continuous and coercive on $W_0^{1,q}(\Omega)$. Therefore, there exists a global minimizer $\psi \in W_0^{1,q}(\Omega)$ and since $\mathcal{J}_\Theta \in C^1$, $\psi$ is the weak solution of the problem $(P_{p,q})$ in the sense that 
\begin{equation*}
    \begin{split}
        \int_{\Omega} |\nabla \psi|^{p-2} \nabla \psi \cdot  \nabla \xi + |\nabla \psi|^{q-2} \nabla \psi \cdot  \nabla \xi ~dx &+ \Theta_\lambda \int_{\Omega} |\psi|^{p-2} u \xi + |\psi|^{q-2} u \xi ~dx\\
        &= \int_{\Omega} g(z)  \xi ~dx \ \text{for} \ \xi \in W_0^{1,q}(\Omega).
    \end{split}
\end{equation*}
Since $g(\zeta) \geq 0$ and $J(\psi^+) \leq J(\psi)$, $\psi \geq 0.$ By using elliptic regularity theory, we get $\psi \in L^\infty(\Omega)$ (see \cite[Page 286]{lady1968}), $\psi \in C^{1,k}(\overline{\Omega})$ (see \cite[Page 22, Theorem 1.7]{lieberman1991}) and $\psi \in C_\delta(\overline{\Omega})^+$ (see \cite[Page 111, 120]{pucci2007}). Now, by using the monotonicity of the operator $\mathcal{L} + \Theta_\lambda \mathbb{L}$ and monotonicity of the function $h$ we get 
$$\zeta \leq \psi \quad \text{and} \ - \mathcal{L}_{p,q} \psi + \Theta_\lambda \mathbb{L}_{p,q}(\psi) =  g(\zeta) \leq g(\psi) = \lambda h(\psi) + \Theta_\lambda \mathbb{L}_{p,q}(\psi).$$ Since, $h(t) \leq \frac{f(t)}{2t^\gamma}$ for all $t \geq 0$, we obtain $v_0:=\psi$ is the subsolution of the singular problem of $(P_\lambda)$ for all $\lambda \in [\lambda_*, \lambda^*]$ and satisfies
$$- \mathcal{L}_{p,q} v_0 \leq \lambda \frac{f(v_0)}{v_0^\gamma} - \varepsilon_* \ \text{in} \ \Omega$$
where $0< \varepsilon_* \leq \frac{\lambda}{2} \min_{x \in \Omega} f(\psi)(x) \psi^{-\gamma}(x)$.
\end{proof}
\section{Proof of main results}
\subsection{Existence and multiplicity results}\label{results}
\noindent Before proving our main result, we recall some definitions and results from \cite{amann1976}.
\begin{define}
A nonempty subset $E$ of a metric space $X$ is called a retract of $X$ if there exists a continuous map $r:X \to E$ such that $r\big|_{E}= \id\big|_E.$  
\end{define}

\begin{define}
Let $X$ be a nonempty subset of some Banach space and $f$ be a map from $X$ into a second Banach space. Then $f$ is called compact if it is continuous and if $f(X)$ is relatively compact. The map $f$ is called completely continuous if $f$ is continuous and maps bounded subsets of $X$ into compact sets.
\end{define}
\begin{remark}
Every nonempty closed convex subset $E$ of a Banach space $X$ is a retract of $X$ and every compact map is completely continuous, and the two notions coincide if $E$ is bounded.
\end{remark}
\begin{thm}[Lemma 4.1, \cite{amann1976}]\label{thm:fixedpoints}
Let $X$ be a retract of some Banach space and let $f: X \to X$ be a compact map. Suppose that $X_1$ and $X_2$ are disjoint retracts of $X$, and let $U_k$, $k = 1, 2$, be open subsets of $X$ such that $U_k \subset X_k$, $k = 1, 2$. Moreover, suppose that $f(X_k)
 \subset X_k$ and that $f$ has no fixed points on $X_k \setminus  U_k$, $k = 1, 2$. Then $f$ has at least three distinct fixed points $x_0, x_1, x_2$ with $x_k \in X_k$, $k = 1, 2$, and $x_0 \in X \setminus (X_1 \cup X_2).$
\end{thm}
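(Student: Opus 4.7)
The plan is to use the fixed point index for compact self-maps of retracts of a Banach space (as developed in Amann's paper) together with its standard properties: normalization, additivity, excision, homotopy invariance, the solution property, and above all the commutativity (restriction-to-a-subretract) property. Throughout, the hypothesis that $X$ is a retract of some Banach space $E$ is precisely what makes the index well defined.

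First I would compute two universal indices. Letting $\rho: E \to X$ denote a retraction and fixing any $x_\star \in X$, the homotopy $H(t,x) = \rho(tf(x) + (1-t)x_\star)$ is a continuous family of compact self-maps of $X$ connecting $f$ (at $t=1$, since $\rho(f(x))=f(x)$ because $f(x)\in X$) to the constant map $x_\star$ (at $t=0$). There are no fixed points on the empty boundary of $X$, so by homotopy invariance and normalization, $i(f, X, X) = i(x_\star, X, X) = 1$. The same argument run inside $X_k$ (using that $X_k$ is itself a retract of $E$, together with $f(X_k) \subset X_k$) gives $i(f|_{X_k}, X_k, X_k) = 1$ for $k=1,2$.

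Next, since $f|_{X_k}$ has no fixed points on $X_k \setminus U_k$, excision on $X_k$ yields $i(f|_{X_k}, U_k, X_k) = i(f|_{X_k}, X_k, X_k) = 1$. I now invoke the commutativity property: because $X_k$ is a retract of $X$, it is closed in $X$, so $\overline{U_k} \subset X_k$; together with $f(\overline{U_k}) \subset f(X_k) \subset X_k$, commutativity gives
\[
i(f, U_k, X) = i(f|_{X_k}, U_k, X_k) = 1 \qquad (k=1,2),
\]
and the solution property produces fixed points $x_k \in U_k \subset X_k$. For the third fixed point, set $V := X \setminus (\overline{U_1} \cup \overline{U_2})$, open in $X$ and disjoint from $U_1, U_2$. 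The sets $U_1, U_2, V$ are pairwise disjoint and their complement in $X$ equals $(\overline{U_1}\cup\overline{U_2})\setminus(U_1\cup U_2) \subset (X_1\setminus U_1)\cup(X_2\setminus U_2)$, which contains no fixed points of $f$ by hypothesis. Additivity then gives
\[
1 = i(f,X,X) = i(f,U_1,X) + i(f,U_2,X) + i(f,V,X) = 1 + 1 + i(f,V,X),
\]
so $i(f,V,X) = -1 \neq 0$, and the solution property produces a fixed point $x_0 \in V$. Any fixed point in $V \cap X_k$ would lie in $X_k \setminus \overline{U_k} \subset X_k \setminus U_k$, which is forbidden, so $x_0 \in X \setminus (X_1 \cup X_2)$; the three points are automatically distinct since $X_1, X_2, X \setminus (X_1 \cup X_2)$ are pairwise disjoint.

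The main obstacle is the commutativity identity $i(f, U_k, X) = i(f|_{X_k}, U_k, X_k)$: it is the defining feature that transfers degree-theoretic information between a retract and its ambient retract, and it requires the full strength of Amann's construction of the index on retracts (not merely the classical Leray--Schauder or Schauder theorems). A minor technicality is that $X$ need not be bounded, but since $f(X)$ is relatively compact one may safely replace $X$ by $X \cap \overline{\mathrm{conv}}(f(X) \cup \{x_\star\})$, a bounded retract on which all indices used above remain defined and take the same values by excision.
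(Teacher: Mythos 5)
The paper does not prove this statement at all---it is quoted verbatim from Amann's Lemma 4.1---and your index-theoretic argument is precisely Amann's original proof: normalization plus an admissible homotopy to a constant to get index $1$ on $X$ and on each $X_k$, excision, the permanence (commutativity) property of the index on retracts to identify $i(f,U_k,X)=i(f|_{X_k},U_k,X_k)=1$, and additivity forcing index $-1$ on the remaining open piece; all the hypotheses you invoke (closedness of $X_k$ in $X$, $\overline{U_k}\subset X_k$, $f(\overline{U_k})\subset X_k$, absence of fixed points on the excised sets) are verified correctly, so the proof is sound and essentially the intended one. One minor remark: the final ``bounded retract'' reduction is unnecessary---Amann's index requires only compactness of the map, not boundedness of $X$---and the proposed replacement $X\cap\overline{\mathrm{conv}}\left(f(X)\cup\{x_\star\}\right)$ need not itself be a retract, so that sentence is best dropped.
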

\begin{cor}[Corollary 6.2, \cite{amann1976}]\label{cor:minf-maxf}
Let $X$ be an ordered Banach space and let $[y_1,y_2]$ be an ordered interval in $X.$ Let $f: [y_1, y_2] \to X$ be an increasing compact map such that $f(y_1) \geq y_1$ and $f(y_2) \leq y_2.$ Then $f$ has a minimal fixed point $x_*$ and a maximal fixed point $x^*.$
\end{cor}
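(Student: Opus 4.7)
The plan is to construct $x_*$ as the limit of the monotone iteration $x_{n+1}=f(x_n)$ started from $x_0=y_1$, and to obtain $x^*$ symmetrically from $y_2$. First I would verify that $f$ in fact maps $[y_1,y_2]$ into itself: for any $y\in[y_1,y_2]$, monotonicity of $f$ gives $f(y_1)\le f(y)\le f(y_2)$, and combining this with the boundary hypotheses $y_1\le f(y_1)$ and $f(y_2)\le y_2$ yields $y_1\le f(y)\le y_2$. Hence the iterates $x_n=f^n(y_1)$ all lie in $[y_1,y_2]$, and an induction based on $x_1=f(y_1)\ge y_1=x_0$ together with monotonicity of $f$ shows that $(x_n)$ is nondecreasing.

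Next I would invoke the compactness of $f$: the tail $\{x_n\}_{n\ge 1}=\{f(x_{n-1})\}_{n\ge 1}$ lies in the relatively compact set $f([y_1,y_2])$, so every subsequence admits a norm-convergent sub-subsequence. The order structure then forces the full sequence to converge: if $x_{n_k}\to\xi$ and $x_{m_j}\to\eta$ are two subsequential limits, monotonicity of $(x_n)$ combined with closedness of the positive cone yields $\xi\le\eta$ and $\eta\le\xi$, and pointedness of the cone forces $\xi=\eta$. Hence $x_n\to x_*$ in norm for some $x_*\in[y_1,y_2]$, and continuity of $f$ (inherited from compactness) gives $x_*=f(x_*)$. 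For minimality, any fixed point $z\in[y_1,y_2]$ satisfies $z\ge y_1=x_0$; inducting with monotonicity of $f$ and $f(z)=z$ yields $z\ge x_n$ for every $n$, and passing to the limit through the closed cone gives $z\ge x_*$. The maximal fixed point $x^*$ is produced by the analogous nonincreasing iteration started from $y_2$, the convergence and extremality arguments being entirely symmetric.

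The main obstacle is the passage from the subsequential convergence supplied by compactness of $f$ to convergence of the full monotone iteration: compactness alone only guarantees clustering points of $(x_n)$, and one genuinely needs the order-theoretic structure of the ordered Banach space --- specifically, closedness and pointedness of the positive cone --- to show all these clustering points coincide. Without this coalescing step the iteration could \emph{a priori} accumulate at several pairwise-incomparable points of $[y_1,y_2]$, and one would not immediately extract a single minimal (respectively maximal) fixed point as claimed.
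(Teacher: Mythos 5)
Your proof is correct and is essentially the standard monotone iteration argument underlying Amann's Corollary 6.2 (which the paper cites rather than proves). The key steps --- invariance of the order interval, monotone iteration from the endpoints, relative compactness of the orbit inside $f([y_1,y_2])$, the order-theoretic collapse of all cluster points to a single limit via closedness and pointedness of the cone, and the induction showing any fixed point dominates every iterate --- are all in order. One small remark: the passage from ``all subsequential limits of $(x_n)$ coincide'' to ``$x_n \to x_*$ in norm'' tacitly uses that $\{x_n\}_{n\ge 1}$ lies in a relatively compact set, which you did set up but might state explicitly, since in a general Banach space a monotone sequence with a unique cluster point need not converge without that precompactness.
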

Now, we prove our existence and multiplicity results for the problem $(P_\lambda)$ via a critical point theorem in \cite{amann1976}. For this, we begin by introducing an equivalent formulation of our original problem $(P_{\lambda}).$ Since $f \in C^1([0, \infty))$, then $\hat{f}$ can be treated as continuous function on $[0, \infty)$ such that $\hat{f}(0)=0.$ Precisely, by mean value theorem we obtain $\hat{f}(t)= \lambda f'(t_0) t^{1-\gamma}$ for some $t_0 \in (0,t)$. Then the fact $\lim_{t \to 0} |f'(t)| < \infty$ and $\gamma \in (0,1)$ implies $\hat{f}(0)=0.$ Herewith, we formulate our equivalent problem as \label{page}
\begin{equation*}
    (\hat{P}_\lambda) \left\{
         \begin{alignedat}{2} 
             {} - \mathcal{L}_{p,q} u - \lambda \frac{f(0)}{u^\gamma}
             & {}= \hat{f}(u), \ u>0
             && \quad\mbox{ in } \, \Omega, \\
             u & {}= 0
             && \quad\mbox{ on } \partial\Omega.
          \end{alignedat}
     \right.
\end{equation*}
We begin by introducing the notion of weak solution of the problem $(\hat{P}_{\lambda})$ as:
\begin{define}
A function $u \in W_0^{1,q}(\Omega)$ is said to be a weak solution of the problem $(\hat{P}_\lambda)$ if 
\begin{enumerate}
    \item for every $K \Subset \Omega$ there exists a constant $c_K >0$ such that $\inf_K u(x) \geq c_K$ and $\hat{f}(u) \in L^{q'}_{loc}(\Omega).$
    \item for every $\xi \in \mathbb{W}$, $u$ satisfies
    \begin{equation*}
    \int_{\Omega} |\nabla u|^{p-2} \nabla u \cdot  \nabla \xi + |\nabla u|^{q-2} \nabla u \cdot  \nabla \xi ~dx -\lambda \int_{\Omega} \frac{f(0)}{u^\gamma} \xi ~dx
    = \int_{\Omega} \hat{f}(u)  \xi ~dx.
\end{equation*}
\end{enumerate}
\end{define}
\label{defini}
\noindent We extend the function $f$ and $\hat{f}$ in a continuous manner such that $f(t)= f(0)$ and $\hat{f}(t) =\hat{f}(0)$ for all $t \leq 0.$ To use critical point theorem in \cite{amann1976}, we introduce a map $\hat{T}: C_0(\overline{\Omega}) \to C_0^1(\overline{\Omega})$ as  $\hat{T}(u) = w$ if and only if $w$ is the weak solution of
\begin{equation}\label{eq:fixedpointmap}
(\hat{P}_{\lambda,u}) \left\{
         \begin{alignedat}{2} 
             {} - \mathcal{L}_{p,q} w - \lambda \frac{f(0)}{w^\gamma}
             & {}= \hat{f}(u), \ w>0
             && \quad\mbox{ in } \, \Omega, \\
             w & {}= 0
             && \quad\mbox{ on } \partial\Omega.
          \end{alignedat}
     \right.
\end{equation}
Without loss of generality, we can assume $\hat{k}=0$ in $(f_4)$ ({\it i.e.} $\hat{f}$ is increasing on $\mathbb{R}^+$). If not, then instead of $(\hat{P}_{\lambda,u})$, we can study  
\begin{equation*}
    (\tilde{P}_{\lambda,u}) \left\{
         \begin{alignedat}{2} 
             {} - \mathcal{L}_{p,q} w - \lambda \frac{f(0)}{w^\gamma} + \hat{k} w
             & {}= \hat{f}(u), \ w>0
             && \quad\mbox{ in } \, \Omega, \\
             w & {}= 0
             && \quad\mbox{ on } \partial\Omega.
          \end{alignedat}
     \right.
\end{equation*}
and establish the same results for $(\tilde{P}_{\lambda,u})$ by defining the map $\hat{T}(u)=w$ iff $w$ is a weak solution of $(\tilde{P}_{\lambda,u}).$  
\begin{pro}\label{pro:relation}
A function $u \in W_0^{1,q}(\Omega) \cap C_0^1(\overline{\Omega}) \cap C_\delta(\overline{\Omega})^+$ is weak solution of $(P_\lambda)$ iff $u$ is a fixed point of the map $\hat{T}.$
\end{pro}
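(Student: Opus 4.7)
The plan is to exploit the pointwise algebraic identity
\[
\hat{f}(t) + \lambda \frac{f(0)}{t^\gamma} = \lambda \frac{f(t)}{t^\gamma}, \qquad t > 0,
\]
which is immediate from the definition of $\hat{f}$. Since $u \in C_\delta(\overline{\Omega})^+$ forces $u(x) \geq c\,\delta(x) > 0$ in $\Omega$ for some $c>0$, the expressions $f(u)/u^\gamma$, $f(0)/u^\gamma$, and $\hat{f}(u)$ are all pointwise well-defined in $\Omega$; together with $u \in L^\infty(\Omega)$ (from the $C_0^1(\overline{\Omega})$ hypothesis) and $\inf_K u \geq c_K > 0$ for every $K \Subset \Omega$, all three quantities are locally bounded, hence in $L^{q'}_{\mathrm{loc}}(\Omega)$. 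This matches the local integrability requirements of Definition \ref{def:weak-dual} and of the weak formulation of $(\hat{P}_\lambda)$.

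For the forward direction, suppose $u$ solves $(P_\lambda)$ weakly. Testing with arbitrary $\xi \in \mathbb{W}$ and splitting the right-hand side of \eqref{eq:notion} via the identity above, one sees immediately that $u$ satisfies the weak formulation of $(\hat{P}_{\lambda, u})$ as well. By uniqueness of the weak solution of $(\hat{P}_{\lambda, u})$ (see the next paragraph), we conclude $\hat{T}(u) = u$. Conversely, if $\hat{T}(u) = u$, then by definition of $\hat{T}$, $u$ satisfies the weak formulation of $(\hat{P}_{\lambda, u})$ with $w = u$, and applying the identity in the reverse direction converts this back into
\[
\int_{\Omega} \bigl(|\nabla u|^{p-2} \nabla u + |\nabla u|^{q-2} \nabla u\bigr) \cdot \nabla \xi \, dx = \lambda \int_{\Omega} \frac{f(u)}{u^\gamma} \xi \, dx
\]
for every $\xi \in \mathbb{W}$, which is exactly the weak formulation of $(P_\lambda)$.

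The only genuinely nontrivial point is the uniqueness of the weak solution of $(\hat{P}_{\lambda, u})$, needed to guarantee that $\hat{T}$ is well-defined and single-valued. Because the map $w \mapsto -\lambda f(0) w^{-\gamma}$ is increasing in $w$ on the positive cone (its derivative $\lambda \gamma f(0) w^{-\gamma-1}$ is strictly positive), it is a monotone perturbation of the monotone operator $-\mathcal{L}_{p,q}$, so the full operator on the left-hand side of $(\hat{P}_{\lambda, u})$ is strictly monotone on $C_\delta(\overline{\Omega})^+$. Combined with the weak comparison principle of \cite[Proposition 6]{papageorgiou2020}, this rules out two distinct weak solutions. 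Everything else reduces to the algebraic identity and standard bookkeeping of test functions in $\mathbb{W}$.
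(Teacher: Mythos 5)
Your forward direction and the algebraic reduction via the identity $\hat{f}(t) + \lambda f(0) t^{-\gamma} = \lambda f(t) t^{-\gamma}$ are sound and match the paper's (implicit) argument, and your observation that uniqueness of $(\hat{P}_{\lambda,u})$ is needed to pass from ``$u$ solves $(\hat{P}_{\lambda,u})$'' to ``$\hat{T}(u)=u$'' is correct. However, your converse direction diverges from what the paper actually proves. You treat the regularity hypothesis $u\in W_0^{1,q}(\Omega)\cap C_0^1(\overline{\Omega})\cap C_\delta(\overline{\Omega})^+$ as given in both directions of the equivalence, so for you the converse is nothing but the algebraic identity. The paper instead takes the converse to start from a fixed point $u\in C_0(\overline{\Omega})$ that is \emph{not} a priori in $C_0^1(\overline{\Omega})\cap C_\delta(\overline{\Omega})^+$, and devotes the entire proof to establishing that regularity: first an $L^\infty$ bound via the cutoff estimate $-\mathcal{L}_{p,q}u\leq C(u^{-\gamma}+u^{p-1})$ together with \cite[Lemma 3.2]{kumar2020}, then sandwiching $C_1\delta\leq v_1\leq u\leq v_2\leq C_2\delta$ using solutions $v_1,v_2$ of the pure-singular problems $-\mathcal{L}_{p,q}v_i = M_i v_i^{-\gamma}$ and the weak comparison principle of \cite[Theorem 1.5]{giacomoni2020}, and finally $C^{1,l}(\overline{\Omega})$ regularity from \cite[Theorem 1.7]{giacomoni2020}. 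This bootstrap is the actual content of the proposition's proof, and it is not optional: in Theorems \ref{thm:existence} and \ref{thm:three-solution} one obtains fixed points of $\hat{T}$ from Amann's theorems inside ordered intervals of $C_\delta(\overline{\Omega})$, and Proposition \ref{pro:relation} is invoked precisely to upgrade those fixed points to weak solutions with $C^{1,l}\cap C_\delta^+$ regularity. Your version leaves that upgrade unjustified at this point of the paper (one would have to appeal to the regularity facts proved later in Proposition \ref{pro:T-prop}). Finally, your uniqueness paragraph is somewhat muddled: strict monotonicity of $w\mapsto -\mathcal{L}_{p,q}w - \lambda f(0)w^{-\gamma}$ (both summands being monotone, the first strictly so by Simon's inequality) already yields uniqueness by the standard test-function argument, and the appeal to the strong comparison principle \cite[Proposition 6]{papageorgiou2020} is not what delivers it; that reference is the paper's tool for strict invariance of $\hat{T}$ on retracts, not for uniqueness of $(\hat{P}_{\lambda,u})$.
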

\begin{proof}
Let  $u \in W_0^{1,q}(\Omega) \cap C_0^1(\overline{\Omega}) \cap C_\delta(\overline{\Omega})^+$ is weak solution of $(P_\lambda)$, then it implies $u$ is the fixed point of the map $\hat{T}$. Conversely, let $u$ is the fixed point of the map $\hat{T}$ {\it i.e.} $\hat{T}(u)=u$ for some $u \in C_0(\overline{\Omega})$ , then $u$ is the weak solution of $(P_\lambda)$ but it remains to prove that $u \in C_0^1(\overline{\Omega})\cap C_\delta(\overline{\Omega})^+.$ Since $u>0$ and $f$ satisfies $(f_0)$-$(f_2)$ then we have 
\begin{equation*}
 - \mathcal{L}_{p,q} u 
             = \lambda \frac{f(u)}{u^\gamma} \leq \lambda \left(\frac{f(u)}{u^\gamma} \chi_{\{|u| \leq K\}} + C(K) u^{p-1}\right) \leq C(\lambda, f, K)  \left(\frac{1}{u^\gamma} + u^{p-1}\right)  \ \text{weakly in} \ \Omega.
\end{equation*}
Using the same arguments in the proof of \cite[Lemma 3.2]{kumar2020}, we obtain $u \in L^\infty(\Omega).$ Now, let $v_1, v_2 \in W_0^{1,q}(\Omega) \cap C_\delta(\overline{\Omega})^+$ be the weak solution of \begin{equation}\label{sub-super}
    - \mathcal{L}_{p,q} v_1=  \frac{M_1}{v_1^\gamma} \quad \text{and} \  - \mathcal{L}_{p,q} v_2=  \frac{M_2}{v_2^\gamma}
\end{equation}
for $M_1 \leq \lambda f(0)$ and $M_2 \geq \lambda f(\|u\|_{L^\infty(\Omega)}).$
The existence of weak solutions $v_1, v_2$ can be proved via \cite[Theorem 1.4]{giacomoni2020}. Since $f(0) \leq f(u) \leq f(\|u\|_{L^\infty(\Omega)})$, then by using weak comparison principle (see \cite[Theorem 1.5]{giacomoni2020}) we get $C_1 \delta(x) \leq v_1(x) \leq u(x) \leq v_2(x) \leq C_2 \delta(x)$ for all $x \in \Omega$. Finally, \cite[Theorem 1.7]{giacomoni2020} gives $u \in C^{1, l}(\overline{\Omega})$ for some $l \in (0,1).$ 
\end{proof}

\begin{pro}\label{pro:T-prop}
The map $\hat{T}: C_0(\overline{\Omega}) \to C_\delta(\overline{\Omega})^+$ is well defined, completely continuous and increasing.
\end{pro}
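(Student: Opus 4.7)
The plan is to split the proposition into three independent verifications: (i) $\hat T$ is well-defined with range in $C_\delta(\overline{\Omega})^+$; (ii) $\hat T$ is monotone; (iii) $\hat T$ is completely continuous. Throughout I use the reduction already made in the text that $(f_4)$ can be taken with $\hat k=0$, so $\hat f$ is nondecreasing on $\mathbb{R}$ with $\hat f(0)=0$ and $\hat f\in C(\mathbb{R})$.

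For well-definedness, fix $u\in C_0(\overline{\Omega})$ and set $h:=\hat f(u)\in L^\infty(\Omega)$, which is nonnegative by monotonicity. The problem $(\hat P_{\lambda,u})$ amounts to solving $-\mathcal{L}_{p,q}w=\lambda f(0)w^{-\gamma}+h(x)$ with $w>0$ in $\Omega$, $w=0$ on $\partial\Omega$. Since $\lambda f(0)>0$, this is a singular problem of the same type as $(P_{\alpha,\beta})$ but with an additional bounded nonnegative source term; existence and uniqueness of a minimal weak solution $w\in W_0^{1,q}(\Omega)$ follow from the argument in Lemma~\ref{lem:pre1} (Appendix) together with the results cited in the proof of Proposition~\ref{pro:relation}, namely \cite[Theorems 1.4, 1.5]{giacomoni2020}. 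Comparing $w$ with the solutions $v_1, v_2$ of $-\mathcal{L}_{p,q}v_i=M_i v_i^{-\gamma}$ for $M_1=\lambda f(0)$ and $M_2=\lambda f(0)+\|h\|_\infty$ (both belonging to $C_\delta(\overline{\Omega})^+\cap C^{1,l}(\overline{\Omega})$), the weak comparison principle yields $c_1\delta(x)\le v_1\le w\le v_2\le c_2\delta(x)$, and \cite[Theorem 1.7]{giacomoni2020} gives $w\in C^{1,l}(\overline{\Omega})$. Therefore $\hat T(u)=w\in C_\delta(\overline{\Omega})^+$.

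For monotonicity, let $u_1\le u_2$ in $C_0(\overline{\Omega})$ and write $w_i=\hat T(u_i)$. By $(f_4)$ with $\hat k=0$, $h_1:=\hat f(u_1)\le \hat f(u_2)=:h_2$ in $\Omega$. The operator $\mathcal{A}(w):=-\mathcal{L}_{p,q}w-\lambda f(0)w^{-\gamma}$ is strictly monotone on positive functions: $-\mathcal{L}_{p,q}$ is the usual strictly monotone $(p,q)$-Laplacian, while $w\mapsto -\lambda f(0)w^{-\gamma}$ is pointwise increasing in $w$ for $w>0$. Hence $\mathcal{A}(w_1)=h_1\le h_2=\mathcal{A}(w_2)$, and the comparison principle \cite[Proposition 6]{papageorgiou2020} applied to the problem, where the singular term is absorbed into the monotone part exactly as above, yields $w_1\le w_2$ in $\Omega$. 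Thus $\hat T$ is increasing.

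Complete continuity requires that bounded subsets of $C_0(\overline{\Omega})$ are mapped into relatively compact subsets of $C_\delta(\overline{\Omega})^+$, and that $\hat T$ is continuous. Let $\{u_n\}\subset C_0(\overline{\Omega})$ with $\|u_n\|_\infty\le M$; then $\|\hat f(u_n)\|_\infty\le K(M)$, so the sub/super solution estimates of the previous paragraph give uniform bounds $c_1(M)\delta\le w_n\le c_2(M)\delta$ on $w_n=\hat T(u_n)$. Because the right-hand side $\lambda f(0)w_n^{-\gamma}+\hat f(u_n)$ is then uniformly bounded in $L^\infty_{\mathrm{loc}}(\Omega)$ and behaves like a power of $\delta^{-\gamma}$ near $\partial\Omega$, the uniform $C^{1,l}(\overline{\Omega})$ estimate \cite[Theorem 1.7]{giacomoni2020} applies, yielding equicontinuity of $w_n/\delta$ up to the boundary. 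By Arzel\`a--Ascoli, $\{w_n\}$ is relatively compact in $C^1(\overline{\Omega})$ and, since all $w_n/\delta$ are uniformly bounded away from $0$, also in $C_\delta(\overline{\Omega})^+$. For continuity, if $u_n\to u$ in $C_0(\overline{\Omega})$, then $\hat f(u_n)\to \hat f(u)$ uniformly; any subsequence of $w_n$ admits, by the compactness just shown, a further $C^1$-convergent subsequence $w_{n_k}\to \tilde w$, and the uniform lower bound $w_{n_k}\ge c_1\delta$ together with the dominated convergence theorem allows passage to the limit in the weak formulation, so $\tilde w$ solves $(\hat P_{\lambda,u})$. Uniqueness from part (i) forces $\tilde w=\hat T(u)$, and a standard subsequence argument gives the full convergence $\hat T(u_n)\to \hat T(u)$ in $C_\delta(\overline{\Omega})^+$.

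The main obstacle is the passage to the limit in the singular term: one has to rule out that the lower bound $w_n\ge c_1\delta$ degenerates, which is handled by the uniform sub-solution $v_1$ whose data $M_1=\lambda f(0)$ does \emph{not} depend on $M$, giving a pointwise $w_n\ge v_1>0$ independent of $n$ and keeping $w_n^{-\gamma}$ under control in $L^1_{\mathrm{loc}}$ (in fact, by Hardy's inequality, against any $W_0^{1,q}(\Omega)$ test function).
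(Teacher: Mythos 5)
Your decomposition into well-definedness, monotonicity, and complete continuity is sound, and the monotonicity part matches the paper. The compactness/continuity part is a genuine alternative: the paper proves continuity \emph{quantitatively} by testing $(\hat P_{\lambda,u_n})$ with $w_n-w$, applying Simon's inequality (Lemma~\ref{lem:mono}) to obtain a $W_0^{1,q}$ estimate, and then interpolating between $W_0^{1,q}$ and $C^{1,l}$ via L\^e's inequality to conclude $C^1$-convergence; you instead use the uniform $C^{1,l}$ bound, Arzel\`a--Ascoli, uniqueness, and a subsequence trick. Both are valid, though yours leans on uniqueness and gives no rate.

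The real gap is in your upper comparison for well-definedness. You claim $w\le v_2$ where $-\mathcal{L}_{p,q}v_2=M_2 v_2^{-\gamma}$ with $M_2=\lambda f(0)+\|h\|_\infty$. For $v_2$ to dominate $w$ by weak comparison (absorbing the singular term into the monotone operator $z\mapsto -\mathcal{L}_{p,q}z-\lambda f(0)z^{-\gamma}$), you need
$-\mathcal{L}_{p,q}v_2-\lambda f(0)v_2^{-\gamma}\ge h$, i.e. $\|h\|_\infty v_2^{-\gamma}\ge h$. This requires $v_2\le 1$, which is not guaranteed: the additive source $h$ in the equation for $w$ carries no $w^{-\gamma}$ weight, so you cannot fold it into a constant $M_2$ multiplying $v_2^{-\gamma}$. (This trick does work in Proposition~\ref{pro:relation}, because there the whole right-hand side $\lambda f(u)/u^\gamma$ is of the form (bounded)$\cdot u^{-\gamma}$; it fails for $(\hat P_{\lambda,u})$.) The paper sidesteps this precisely by using the doubly parametrized problem: it takes $\overline w=Mu_\alpha$ with $u_\alpha=u_{\alpha,1}$ solving $(P_{\alpha,\beta})$ for $\alpha=M^{p-q}$, so that $-\mathcal{L}_{p,q}\overline w=M^{q-1}u_\alpha^{-\gamma}$, and then the gap
$M^{q-1}u_\alpha^{-\gamma}-\lambda f(0)(Mu_\alpha)^{-\gamma}=(M^{q-1}-\lambda f(0)M^{q-1-\gamma})u_\alpha^{-\gamma}$
blows up as $M\to\infty$ (using $q-1>q-1-\gamma$ and the uniform bound $\|u_\alpha\|_\infty\le C$ from Remark~\ref{rem:imp}), so it eventually dominates $\|h\|_\infty$. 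You should either adopt that scaled construction, or replace $v_2$ by the solution of $-\mathcal{L}_{p,q}v_2=\lambda f(0)v_2^{-\gamma}+\|h\|_\infty$ (solvable by the same variational argument), which \emph{is} an honest supersolution; as written your $v_2$ is not.

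Two smaller points. First, Lemma~\ref{lem:pre1} covers $(P_{\alpha,\beta})$ only, with no additive $h$; citing it ``together with'' external results is fine in spirit, but you should acknowledge that the energy functional for $(\hat P_{\lambda,u})$ has the extra linear term $-\int \hat v\,w$ and that the paper handles differentiability at the minimizer via the auxiliary $u_\eta$ (solution of $-\mathcal{L}_{p,q}u_\eta=\eta$), not via Lemma~\ref{lem:pre1} directly. Second, your use of Hardy's inequality to control $w_n^{-\gamma}$ against $W_0^{1,q}$ test functions in the limit passage is correct once $w_n\ge c_1\delta$ with $c_1$ independent of $n$, and that uniform lower bound you justified correctly via $v_1$ (whose data $M_1=\lambda f(0)$ is independent of $n$).
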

\begin{proof}
To show the map $\hat{T}$ is well defined, we have to claim that for every $u \in C_0(\overline{\Omega})$, the problem $(\hat{P}_{\lambda, u})$ has a unique solution in $C_\delta(\overline{\Omega})^+.$ Let $u \in C_0(\overline{\Omega})$ and $\hat{v}= \hat{f}(u).$ Then $\hat{v} \in C_0(\overline{\Omega})$ and $v \geq 0$ in $\Omega.$ Let us consider the following energy functional defined on $W_0^{1,q}(\Omega)$
\begin{equation*}
    \mathcal{E}(w)=  \int_{\Omega} \left(\frac{|\nabla w|^p}{p} + \frac{|\nabla w|^q}{q} \right) ~dx - \lambda f(0) \int_{\Omega} (w^+)^{1-\gamma} ~dx - \int_{\Omega} \hat{v} w ~dx.
\end{equation*}
It is easy to see that the functional $\mathcal{E}$ is coercive and weakly lower semi-continuous on $W_0^{1,q}(\Omega).$ Therefore, there exists a global minimizer $w \in W_0^{1,q}(\Omega)$ and owing to $\mathcal{E}(0)=0 > \mathcal{E}(\epsilon w)$ for every $\epsilon$ small enough and $\mathcal{E}(w) \geq \mathcal{E}(w^+)$, we get $w  \not \equiv 0$ and $w \geq 0.$ \\
Now, we will prove that $w$ is infact the weak solution of the problem $(\hat{P}_{\lambda,u}).$ Let $u_\eta \in W_0^{1,q}(\Omega) \cap C_\delta(\overline{\Omega})^+$ be the weak solution of \eqref{eq:no-singlar}. Due to fact that $u_\eta \in C_\delta(\overline{\Omega})^+$,  $u_\eta \to 0$ in $C_0^1(\overline{\Omega})$ as $\eta \to 0^+$ and Hardy's inequality, we obtain that functional $\mathcal{E}$ is G\^{a}teaux differentiable at $u_\eta$ and hence for $\eta$ small enough
\begin{equation*}
    \mathcal{E}'(u_\eta) = -\mathcal{L}_{p,q} u_\eta - \lambda \frac{f(0)}{u_\eta^\gamma} = \eta - \lambda \frac{f(0)}{u_\eta^\gamma} <0.
\end{equation*}
Define $g: (0,1] \to R$ as $g(t)= \mathcal{E}(w+tv)$ where $v= (u_\eta -w)^+.$ Since $w+tv \geq t u_\eta$ for $t \in (0,1]$ and $g$ is strictly convex, we get $g$ is differentiable on $(0,1]$ and $g'$ is nonnegative and nondecreasing. Therefore, if $|\supp(w)| \neq 0$ $$0 \leq g'(1)- g'(t) \leq g'(1) = \mathcal{E}(u_\eta) <0$$
is a contradiction. Thus, $c \delta(x) \leq w(x)$ in $\Omega$ and $\mathcal{E}$ is differentiable at $w$. This further implies that $w$ is the weak solution of $(\hat{P}_{\lambda,u}).$ Now, to prove the upper boundary behavior and $C^1_0(\overline{\Omega})$ regularity of weak solution $w$, we construct a supersolution of the problem $(\hat{P}_{\lambda, u}).$ Define $\overline{w}= M u_\alpha$ where $u_\alpha:= u_{\alpha,1} \in W_0^{1,q}(\Omega) \cap C_\delta(\overline{\Omega})^+$ is the unique solution of the problem $(P_{\alpha, \beta})$ with $\alpha= M^{p-q}$, $\beta=1$ and $M$ is chosen large enough such that
$$M^{q-1} \left( \frac{1}{u_\alpha^\gamma} - \lambda \frac{f(0)}{(M u_\alpha)^\gamma}\right) \geq \hat{v}.$$
Then,
\begin{equation*}
\begin{split}
  - \mathcal{L}_{p,q} \overline{w} - \lambda \frac{f(0)}{\overline{w}^\gamma}& = - M^{q-1} \left(\div( \alpha |\nabla u_\alpha|^{p-2} \nabla u_\alpha) + \div(|\nabla u_\alpha|^{q-2} \nabla u_\alpha) \right)  - \lambda \frac{f(0)}{\overline{w}^\gamma} \\
  &= - M^{q-1} \mathcal{L}_{p,q}^\alpha u_\alpha  - \lambda \frac{f(0)}{(M u_\alpha)^\gamma} = M^{q-1} \left(  \frac{1}{u_\alpha^\gamma} - \lambda \frac{f(0)}{(M u_\alpha)^\gamma} \right) > \hat{v}.
  \end{split}
\end{equation*}
Now, by using the comparison principle (with minor changes in the proof of \cite[Theorem 1.5]{giacomoni2020})), we get $w(x) \leq C\delta(x)$ in $\Omega.$ Finally  \cite[Theorem 1.7]{giacomoni2020} implies $w \in C^{1, l}(\overline{\Omega})$ and $\hat{T}$ maps bounded subsets of $L^\infty(\Omega)$ to bounded subsets of $C^{1,l}(\overline{\Omega})$ for some $l \in (0,1).$ \vspace{0.1cm}\\ 
To prove the continuity of the operator $\hat{T}$, let $u_n$ be a bounded sequence in $C_0(\overline{\Omega})$ such that $u_n \to u$ in $C_0(\overline{\Omega})$ as $n \to \infty.$ Since $\hat{f}$ is a continuous function and $\hat{f}(0)=0$, $\hat{f}(u_n) \to \hat{f(u)}$ in $C_0(\overline{\Omega})$ as $n \to \infty.$ Denote $w_n= \hat{T}(u_n)$ and $w:= \hat{T}(u).$ As we know that $w_n, w \in W_0^{1,q}(\Omega) \cap C^{1,l}(\overline{\Omega}) \cap C_\delta(\overline{\Omega})^+$ for some $l \in (0,1)$, then by taking $(w_n-w)$ as a test function in the weak formulation of problem $(\hat{P}_{\lambda,u})$ and Lemma \ref{lem:mono}, we get: for $q \geq 2$ \vspace{0.1cm}\\ 
\begin{equation*}
    \begin{split}
       C \int_{\Omega} |\nabla (w_n-w)|^q ~dx &\leq \int_{\Omega} \left(|\nabla w_n|^{p-2} \nabla w_n + |\nabla w|^{q-2} \nabla w \right) \nabla (w_n-w) ~dx \\
       & \quad \quad \quad \quad - \lambda f(0) \int_{\Omega} \left( \frac{1}{w_n^\gamma} - \frac{1}{w^\gamma}\right) (w_n-w)~dx\\
       & = \int_{\Omega} \left(\hat{f}(u_n)- \hat{f}(u) \right) (w_n-w) ~dx \\
       &\leq \|\hat{f}(u_n)- \hat{f}(u)\|_{L^q(\Omega)} \|w_n-w\|_{L^q(\Omega)}\\
       & \leq C \|\hat{f}(u_n)- \hat{f}(u)\|_{L^\infty(\Omega)} \|w_n-w\|_{W_0^{1,q}(\Omega)}.
    \end{split}
\end{equation*}
Since, $w_n$ is uniformly bounded in $ C^{1,l}(\overline{\Omega})$ for some $l \in (0,1)$ and by using interpolation inequality \cite[Corollary 1.3]{le2007}) for any $\theta \in (0,1)$, we get
\begin{equation*}
    \begin{split}
        \|w_n-w\|_{C^1(\overline{\Omega})} &\leq C_1 \|w_n-w\|^{1-\theta}_{C^{1,\gamma}(\overline{\Omega})} \|w_n-w\|_{W_0^{1,q}(\Omega)}^\theta\\
        & \leq C_2  \|\hat{f}(u_n)- \hat{f}(u)\|_{L^\infty(\Omega)}^{\frac{\theta}{q-1}} \leq C_3 \|\hat{f}(u_n)- \hat{f}(u)\|_{C_0(\overline{\Omega})}^{\frac{\theta}{q-1}} \to 0\ \text{as} \ n \to \infty
    \end{split}
\end{equation*}
Since, $\||\nabla w_n|+|\nabla w|\|_{L^\infty(\Omega)} \leq C_0$, $C_0$ is independent of $n$, then by again using interpolation inequality and Lemma \ref{lem:mono} for $q \leq 2$, we get
\begin{equation*}
    \begin{split}
       C_0^{-1} \int_{\Omega} |\nabla (w_n-w)|^2 ~dx \leq C \|\hat{f}(u_n)- \hat{f}(u)\|_{L^\infty(\Omega)} \|w_n-w\|_{W_0^{1,2}(\Omega)}
    \end{split}
\end{equation*}
and $\|w_n-w\|_{C^1(\Omega)} \to 0$ as $n \to \infty.$
\noindent Since $C^{1,l}(\overline{\Omega}) \Subset C^1(\overline{\Omega})$ we have $\hat{T}: C_0(\overline{\Omega}) \to C_0^1(\overline{\Omega})$ is completely continuous. Let $u_1, u_2 \in C_\delta(\overline{\Omega})^+$ such that $u_1 \leq u_2$ in $\Omega$ then there exists $s \in (0,\|u_2\|_{L^\infty(\Omega)})$ such that $\hat{f}(t)= \lambda f'(s) t^{1-\gamma}$ and $\hat{f}$ is increasing in $[0,\|u_2\|_{L^\infty(\Omega)}]$ and $\hat{f}(u_1) \leq \hat{f}(u_2).$ Then, by weak comparison principle we get $\hat{T}(u_1) \leq \hat{T}(u_2)$ {\it i.e.} the map $\hat{T}$ is increasing.
\end{proof}

\noindent \textbf{Proof of Theorem \ref{thm:existence}:}
First, we prove the existence of a fixed point of map $\hat{T}$ defined in \eqref{eq:fixedpointmap} {\it i.e} the existence of weak solution of the equivalent problem $(\hat{P}_\lambda).$\\
Let $X= [u_0, v^0]$ where $u_0,u^0$ are first pair of strict sub and super solution constructed in Theorem \ref{thm:pair1}. Using the same arguments as in Proposition \ref{pro:T-prop}, we get that the map $\hat{T}: X \to X$ is increasing, completely continuous and self invariant. By applying Corollary \ref{cor:minf-maxf}, we get the existence of a minimal fixed point $u_\lambda \in C_\delta(\overline{\Omega})^+ \cap C^{1,l}(\overline{\Omega})$ of map $\hat{T}$ in X for some $l \in (0,1)$. Finally, Proposition \ref{pro:relation} implies that the fixed point $u_\lambda$ is a weak solution of the problem $(P_\lambda).$ 
\qed \vspace{0.1cm}\\
\noindent \textbf{Proof of Theorem \ref{thm:three-solution}:} Define $$X= [u_0, u^0], \quad X_1= [u_0, v^0 ], \quad X_2= [v_0, u^0].$$
The sets $X_i$ for each $i=1,2$ and $X$ form retracts of $C_\delta(\overline{\Omega})$, since they are nonempty, closed and convex subsets of Banach space $C_\delta(\overline{\Omega}).$ The functions $u_0, u^0$ and $\hat{T}(u_0), \hat{T}(u^0)$ satisfies the following inequalities
\begin{equation*}
    \begin{split}
       - \mathcal{L}_{p,q} u_0 - \lambda \frac{f(0)}{u_0^\gamma} & \leq \hat{f}(u_0) - a_* < \hat{f}(u_0) = - \mathcal{L}_{p,q} \hat{T}(u_0) - \lambda \frac{f(0)}{(\hat{T}(u_0))^\gamma}\\
       & \leq - \mathcal{L}_{p,q} \hat{T}(u^0) - \lambda \frac{f(0)}{(\hat{T}(u^0))^\gamma} = \hat{f}(u^0) \\
       & < \hat{f}(u^0) + a^* \leq  - \mathcal{L}_{p,q} u^0 - \lambda \frac{f(0)}{(u^0)^\gamma} \quad \text{in} \ \Omega
    \end{split}
\end{equation*}
for $0< \alpha_* \leq \min\{\varepsilon_*, \chi_{\#}\}$ and $a^* \leq \min\{\varepsilon^*, \chi^{\#}\}.$ Since $u_0, u^0$ are ordered sub and super solutions of $(P_\lambda)$, respectively and $\hat{f}$ in increasing on $[0, \infty)$, then by using strong comparison principle (see \cite[Proposition 6]{papageorgiou2020})  we get 
$$ \hat{T}(u_0) - u_0,\ u^0 - \hat{T}(u^0) \in C_\delta(\overline{\Omega})^+  \ \text{and} \ \hat{T}(X) \subset X.$$
\label{strict}
Using the same arguments as above, Proposition \ref{pro:T-prop},  Corollary \ref{cor:minf-maxf}, we get $$\hat{T}(v_0) - v_0,\ v^0 - \hat{T}(v^0) \in C_\delta(\overline{\Omega})^+, \  \hat{T}(X_i) \subset X_i  \ \text{for}\  i=1,2$$ and there exists a minimal fixed point $\hat{T}(u_1) = u_1  \in X_1$ such that $u_1 \in (u_0, v^0)$ and a maximal fixed point $\hat{T}(u_2)= u_2 \in X_2$ such that $u_2 \in (v_0, u^0).$ Since the map $\hat{T}$ is increasing, then there exist positive constant $c_i>0$ for $i=1,2$ and $\Theta \in C_\delta(\overline{\Omega})^+$ such that
$$u_0 + c_1 \Theta \leq \hat{T}(u_0) \leq  \hat{T}(u_1) =u_1,\quad  v^0 - u_1 = v^0 - \hat{T}(u_1) \geq v^0 - \hat{T}(v^0) \geq c_1 \Theta$$
and
$$u^0 + c_2 \Theta \geq \hat{T}(u^0) \geq  \hat{T}(u_2) =u_2,\quad  u_2 - v_0 = \hat{T}(u_2) - v_0 \geq \hat{T}(v_0) - v_0 \geq c_2 \Theta.$$
For fixed points $u_i$, we define the open ball $B_i$ in $X$ by
$$B_i:= X \cap \{\phi \in C_\delta(\overline{\Omega})^+: \|u_i - \phi\|_{C_\delta(\overline{\Omega})} \leq c_i \}.$$
Then for every $i=1,2$, we have $u_i + B_i \subset X_i$ {\it i.e.} $X_i$ have nonempty interior. So, we construct open balls around each fixed points of the map $\hat{T}$ in $X_i$ and by taking union of all such balls $B_i$ say $\mathbf{B}_i$ such that $X_i \setminus \mathbf{B}_i$ contains no fixed map of $\hat{T}.$ Finally, by using Theorem \ref{thm:fixedpoints}, we get the existence of third point $u_3$ in $X \setminus (X_1 \cup X_2).$ 
\qed

\section{Appendix}
In this section, we recall suitable inequalities due to Simon \cite{Simon} and prove some preliminary results:  
\begin{Lem}\label{lem:mono}
For any $u,v \in \mathbb{R}^N$, we have
\begin{equation*}
    \left< |\nabla u|^{q-2} u- |\nabla v|^{q-2} v, u-v \right> \geq c(q)  |\nabla u - \nabla v|^q  \quad \text{if} \quad q \geq 2
\end{equation*}
\begin{equation*}
   \left< |\nabla u|^{q-2} u- |\nabla v|^{q-2} v, u-v \right> \geq c(q)  \frac{|\nabla u - \nabla v|^2}{(|\nabla u| + \nabla v)^{2-q}} \quad \text{if} \quad 1< q < 2.
\end{equation*}
\end{Lem}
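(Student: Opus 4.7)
My plan is to prove the pointwise Simon inequality (reading the symbols $|\nabla u|,|\nabla v|$ in the statement as simply $|u|,|v|$, since $u,v\in\mathbb{R}^N$ are vectors) via an eigenvalue analysis of the Jacobian of the map $A(\xi):=|\xi|^{q-2}\xi$ combined with the fundamental theorem of calculus along the segment joining $u$ and $v$. A direct differentiation on $\mathbb{R}^N\setminus\{0\}$ gives
$$DA(\xi)=|\xi|^{q-2}\!\left(I+(q-2)\,\frac{\xi\otimes\xi}{|\xi|^2}\right),$$
and the bracketed symmetric matrix has eigenvalues $q-1$ along $\mathbb{R}\xi$ and $1$ on $\xi^{\perp}$, so its smallest eigenvalue equals $\mu_q:=\min(1,q-1)$. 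Setting $w(t):=v+t(u-v)$, the FTC then yields
$$\langle A(u)-A(v),u-v\rangle=\int_{0}^{1}\langle DA(w(t))(u-v),u-v\rangle\,dt\;\geq\;\mu_q\,|u-v|^2\int_{0}^{1}|w(t)|^{q-2}\,dt.$$

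The two regimes now diverge. For $q\geq 2$ (so $\mu_q=1$), I will bound $\int_0^1|w(t)|^{q-2}\,dt$ from below by $c_q|u-v|^{q-2}$ via an orthogonal decomposition: with $t_0:=-v\!\cdot\!(u-v)/|u-v|^2$ one checks $|w(t)|^2=|w(t_0)|^2+(t-t_0)^2|u-v|^2\geq(t-t_0)^2|u-v|^2$, so
$$\int_{0}^{1}|w(t)|^{q-2}\,dt\;\geq\;|u-v|^{q-2}\int_{0}^{1}|t-t_0|^{q-2}\,dt,$$
and a one-variable minimization over $t_0\in\mathbb{R}$ shows the last integral is $\geq 2^{2-q}/(q-1)$, the minimum being attained at $t_0=1/2$. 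This produces the first claim with $c(q)=2^{2-q}/(q-1)$. For $1<q<2$ (so $\mu_q=q-1$) I instead use the trivial upper bound $|w(t)|\leq(1-t)|v|+t|u|\leq|u|+|v|$; since $q-2<0$ this reverses to $|w(t)|^{q-2}\geq(|u|+|v|)^{q-2}$, and integration gives the second claim with $c(q)=q-1$.

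The main obstacle is the degenerate case $1<q<2$ in which the segment $w([0,1])$ passes through the origin, where $DA$ fails to be integrable along the path and the FTC step does not apply directly. I will handle this by approximation: fix a unit vector $e$ transversal to $u-v$, set $u_\varepsilon:=u+\varepsilon e$, $v_\varepsilon:=v+\varepsilon e$ (so that $w_\varepsilon(t)\neq 0$ on $[0,1]$ for $\varepsilon>0$), apply the bound just established to $(u_\varepsilon,v_\varepsilon)$, and let $\varepsilon\to 0^+$ using the fact that both sides of the target inequality are continuous functions of $u,v$ whenever $q>1$. For $q\geq 2$, $A$ is of class $C^1$ on all of $\mathbb{R}^N$, so no approximation is required and the FTC applies verbatim.
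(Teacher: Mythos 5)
Your proof is correct, but note that the paper itself does not prove this lemma at all: it is recalled verbatim (typos included) from Simon's paper \cite{Simon}, so there is no internal argument to compare against. Your route is the standard self-contained derivation: differentiate $A(\xi)=|\xi|^{q-2}\xi$, observe that the eigenvalues of $DA(\xi)$ are $|\xi|^{q-2}(q-1)$ along $\xi$ and $|\xi|^{q-2}$ on $\xi^{\perp}$, and integrate along the segment; the orthogonal decomposition with $t_0=-v\cdot(u-v)/|u-v|^2$ and the minimization giving $\int_0^1|t-t_0|^{q-2}\,dt\geq 2^{2-q}/(q-1)$ are both right, so you obtain the explicit constants $c(q)=2^{2-q}/(q-1)$ for $q\geq 2$ and $c(q)=q-1$ for $1<q<2$, which is all the lemma requires. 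Your handling of the degenerate case $1<q<2$ is also sound: the segment $[v,u]$ can meet the origin only if $u,v$ and $0$ are collinear, in which case the shift by $\varepsilon e$ with $e\notin\mathbb{R}(u-v)$ keeps the segment away from $0$, and both sides pass to the limit by continuity of $A$ and of $(|u|+|v|)^{2-q}$ (an alternative, avoiding the perturbation, is to note that $t\mapsto|w(t)|^{q-2}$ is still integrable since $q-2>-1$, so the FTC identity survives as an absolutely continuous statement). Two cosmetic points: you should dispose of the trivial case $u=v$ before defining $t_0$, and you were right to read the statement's $|\nabla u|,|\nabla v|$ and $(|\nabla u|+\nabla v)$ as $|u|,|v|$ and $(|u|+|v|)$ --- these are misprints in the paper's formulation of Simon's inequalities.
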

\begin{Lem}\label{lem:singularcase}
Let $1 < q<2$. Then there exists a constant $C>0$ such that for any $u_1, u_2 \in W_0^{1,q}(\Omega)$:
\begin{equation}
\begin{split}
    \int_{\Omega} &\left(|\nabla u_2|^{q-2} \nabla u_2 - |\nabla u_1|^{q-2} \nabla u_1\right)(\nabla u_2 - \nabla u_1) ~dx \\
    & \geq C \left( \int_{\Omega} |\nabla (u_2-u_1)|^q ~dx  \left(\|(|\nabla u_2| + |\nabla u_1|)^\Theta\|_{L^{\frac{2}{2-q}}}\right)^{-1} \right)^{\frac{2}{q}}
\end{split}
\end{equation}
where $\Theta= \frac{q(2-q)}{2}.$
\end{Lem}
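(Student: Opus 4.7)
The plan is to deduce this Simon-type inequality directly from the pointwise bound furnished by Lemma \ref{lem:mono} in the singular regime $1<q<2$, combined with a single application of H\"older's inequality. After integrating the pointwise inequality, the task reduces to an exponent-matching computation designed to produce exactly the weight $\Theta = q(2-q)/2$ advertised in the statement.

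First I would integrate the pointwise inequality from Lemma \ref{lem:mono} over $\Omega$ to get
\begin{equation*}
\int_{\Omega} \bigl(|\nabla u_2|^{q-2}\nabla u_2 - |\nabla u_1|^{q-2}\nabla u_1\bigr)\cdot(\nabla u_2-\nabla u_1)\,dx \;\geq\; c(q)\int_{\Omega} \frac{|\nabla(u_2-u_1)|^2}{(|\nabla u_2|+|\nabla u_1|)^{2-q}}\,dx.
\end{equation*}
Thus it suffices to bound the right-hand integral below by the announced quantity. This is where I would invoke the algebraic factorization
\begin{equation*}
|\nabla(u_2-u_1)|^q \;=\; \left[\frac{|\nabla(u_2-u_1)|^2}{(|\nabla u_2|+|\nabla u_1|)^{2-q}}\right]^{q/2}\cdot(|\nabla u_2|+|\nabla u_1|)^{\Theta},
\end{equation*}
with $\Theta=q(2-q)/2$ chosen precisely so that the two factors recombine to $|\nabla(u_2-u_1)|^q$.

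Next I would apply H\"older's inequality to the integral of $|\nabla(u_2-u_1)|^q$ with conjugate exponents $2/q$ and $2/(2-q)$, which yields
\begin{equation*}
\int_{\Omega}|\nabla(u_2-u_1)|^q\,dx \;\leq\; \left(\int_{\Omega}\frac{|\nabla(u_2-u_1)|^2}{(|\nabla u_2|+|\nabla u_1|)^{2-q}}\,dx\right)^{q/2}\cdot\bigl\|(|\nabla u_2|+|\nabla u_1|)^{\Theta}\bigr\|_{L^{2/(2-q)}}.
\end{equation*}
The choice $\Theta\cdot\tfrac{2}{2-q}=q$ makes the $L^{2/(2-q)}$ norm finite whenever $u_1,u_2\in W_0^{1,q}(\Omega)$, so the estimate is meaningful. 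Rearranging this inequality, raising both sides to the power $2/q$, and combining with the integrated form of Lemma \ref{lem:mono} delivers the claimed bound with constant $C=c(q)$.

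There is essentially no hard step: the only delicate point is making sure the H\"older exponents line up with the prescribed weight exponent $\Theta=q(2-q)/2$, which is exactly what the factorization forces. If one were to complain about a technical issue, it would be the measurability and integrability of the integrand when the denominator $(|\nabla u_2|+|\nabla u_1|)^{2-q}$ vanishes on a set of positive measure; but the pointwise inequality of Lemma \ref{lem:mono} is understood with the convention that the ratio is $0$ where the numerator vanishes, and the H\"older step never involves dividing by the weight, so no difficulty arises.
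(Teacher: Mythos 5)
Your argument is correct and follows essentially the same route as the paper's: integrate the pointwise Simon inequality from Lemma \ref{lem:mono}, then apply H\"older with exponents $2/q$ and $2/(2-q)$ to the factorization $|\nabla(u_2-u_1)|^q = \bigl[|\nabla(u_2-u_1)|^2/(|\nabla u_2|+|\nabla u_1|)^{2-q}\bigr]^{q/2}(|\nabla u_2|+|\nabla u_1|)^{\Theta}$ and rearrange. The paper's proof states exactly these two inequalities (its \eqref{est:singcase1} is precisely your H\"older step with the $L^{2/q}$ norm written out) and combines them, so your write-up is just a more explicit version of the same argument.
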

\begin{proof}
First, using H\"older inequality, we obtain:
\begin{equation}\label{est:singcase1}
    \int_{\Omega} |\nabla (u_2-u_1)|^q ~dx \left(\|(|\nabla u_2| + |\nabla u_1|)^\Theta\|_{L^{\frac{2}{2-q}}}\right)^{-1} \leq C \left\|\frac{|\nabla (u_2-u_1)^q|}{(|\nabla u_2| + |\nabla u_1|)^\Theta}\right\|_{L^\frac{2}{q}(\Omega)}
\end{equation}
On the other hand, from Lemma \ref{lem:mono},
\begin{equation}\label{est:singcase2}
    \int_{\Omega} \left(|\nabla u_2|^{q-2} u_2- |\nabla u_1|^{q-2} u_1\right) \nabla (u_2-u_1) ~dx  \geq c(q) \int_{\Omega} \frac{|\nabla u_2 - \nabla u_1|^2}{(|\nabla u_2| + \nabla u_1)^{2-q}} ~dx
\end{equation}
Now, by combining \eqref{est:singcase1} and \eqref{est:singcase2}, we obtain our claim.
\end{proof}
Before proving the existence and regularity result for the doubly paramterized problem $(P_{\alpha, \beta})$, we recall the following lemma from \cite{giacomoni2007} for the construction of the barrier function for the problem $(P_{\alpha,\beta}):$
\begin{Lem}[Lemma A.7, \cite{giacomoni2007}] There exists a $C^1$ function $\Xi_\tau: [0, R_\tau] \to [0, \infty]$ satisfying the initial value problem 
\begin{equation}\label{eq:bigest}
    \left\{
         \begin{alignedat}{2} 
             {} -\frac{d}{dr} (|\Xi_\tau'(r)|^{q-2} \Xi_\tau'(r))
             & {}= \frac{1}{\Xi_\tau^\gamma(r)}, \ w_\eta>0
             && \quad\mbox{ for} \ \, r \in (0, R_\tau), \\
             \Xi_\tau(0) & {}= 0, \quad \Xi_\tau'(0)= \tau >0.
          \end{alignedat}
     \right.
\end{equation}
where $R_\tau = \sup \{ s \in (0, \infty): \Xi_\tau'(t) >0 \ \text{for all} \ t \in (0,s)\}.$ The function $\Xi_\tau$ is represented in the form 
$$\Xi_\tau(r)= \tau^\frac{q}{\gamma-1} \Xi_1 (\tau^{\frac{-q}{q+\gamma-1}} r)\ \text{for} \ 0 \leq r \leq R_\tau:= \tau^{\frac{-q}{q+\gamma-1}} R_1 > \diam(\Omega).$$
where the function $\Xi_1$ and $R_1$ are given by \cite[(A.14)]{giacomoni2007} and \cite[(A.15)]{giacomoni2007} respectively. Furthermore, $\Xi_\tau$ is strictly increasing in $[0,R_\tau]$ and $\Xi_\tau'$ is strictly decreasing in $[0,R_\tau)$ and
\begin{equation}\label{eq:smallest}
  -\frac{d}{dr} (|\Xi_\tau'(r)|^{p-2} \Xi_\tau'(r)) \geq 0 \ \text{for} \ r \in (0, R_\tau).  
\end{equation}
\end{Lem}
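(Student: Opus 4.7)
The plan is to establish the lemma by (i) first constructing the solution $\Xi_1$ corresponding to the normalized initial slope $\tau=1$, (ii) producing $\Xi_\tau$ from $\Xi_1$ via a scaling argument dictated by the homogeneities in the ODE, and (iii) then reading off monotonicity and the auxiliary inequality \eqref{eq:smallest} directly from the equation. Because this is a one-dimensional initial value problem, I would avoid any variational machinery and work entirely with the integrated form of the equation.

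For existence of $\Xi_1$, I would rewrite \eqref{eq:bigest} in integrated form. Since $\Xi_\tau'(0)=\tau>0$ and $\Xi_\tau^{-\gamma}>0$ on $(0,R_\tau)$, a single integration on $[0,r]$ yields
\begin{equation*}
|\Xi_\tau'(r)|^{q-2}\Xi_\tau'(r)=\tau^{q-1}-\int_0^r \frac{1}{\Xi_\tau^\gamma(s)}\,ds,
\end{equation*}
so that (while the right-hand side is positive) $\Xi_\tau'(r)=\bigl(\tau^{q-1}-\int_0^r \Xi_\tau^{-\gamma}\bigr)^{1/(q-1)}$, and a second integration gives a fixed-point equation
\begin{equation*}
\Xi_\tau(r)=\int_0^r\Bigl(\tau^{q-1}-\int_0^s \Xi_\tau^{-\gamma}(t)\,dt\Bigr)^{1/(q-1)}\,ds.
\end{equation*}
The only subtlety is the singularity at $r=0$ coming from $\Xi_\tau(0)=0$, but since $\Xi_\tau'(0)=\tau$ the ansatz $\Xi_\tau(s)\sim \tau s$ near $0$ yields $\Xi_\tau(s)^{-\gamma}\sim (\tau s)^{-\gamma}$, which is integrable at $0$ because $\gamma\in(0,1)$. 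I would solve this fixed-point problem on a small interval $[0,r_0]$ by Schauder's theorem applied in a convex set of $C^1([0,r_0])$ functions sandwiched between $\tfrac{\tau}{2}r$ and $2\tau r$, then continue the local solution to the maximal interval of positivity of $\Xi_\tau'$, whose right endpoint is by definition $R_\tau$.

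Once $\Xi_1$ exists, I would obtain $\Xi_\tau$ by an explicit similarity transform: look for constants $A,B>0$ so that $\Xi_\tau(r)=A\,\Xi_1(Br)$ solves the original ODE with initial slope $\tau$. A direct substitution gives the two algebraic conditions $AB=\tau$ (from $\Xi_\tau'(0)=\tau$) and $A^{q+\gamma-1}B^{q}=1$ (from matching the ODE), which determine $A$ and $B$ uniquely as powers of $\tau$ with the exponents stated in the lemma; in particular $R_\tau=R_1/B$ takes the claimed form. Since the exponent $-q/(q+\gamma-1)$ of $\tau$ in $R_\tau$ is strictly negative, one can pick $\tau>0$ sufficiently small so that $R_\tau>\diam(\Omega)$, which is the only role of $\tau$ in the applications.

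Finally, the monotonicity statements and \eqref{eq:smallest} come for free from the ODE. Writing the equation pointwise on $(0,R_\tau)$ gives $-(q-1)|\Xi_\tau'|^{q-2}\Xi_\tau''=\Xi_\tau^{-\gamma}>0$, so $\Xi_\tau''<0$, hence $\Xi_\tau'$ is strictly decreasing; combined with $\Xi_\tau'>0$ on $[0,R_\tau)$ this gives strict monotonicity of $\Xi_\tau$. For the $p$-version, the same identity yields
\begin{equation*}
-\frac{d}{dr}\bigl(|\Xi_\tau'(r)|^{p-2}\Xi_\tau'(r)\bigr)=-(p-1)|\Xi_\tau'(r)|^{p-2}\Xi_\tau''(r)\geq 0,
\end{equation*}
since $\Xi_\tau''\leq 0$ and $\Xi_\tau'\geq 0$, which is exactly \eqref{eq:smallest}. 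The only genuine difficulty in the whole argument is the first step, namely setting up the Schauder fixed-point so that the singular integrand $\Xi_\tau^{-\gamma}$ is dominated uniformly; everything else is algebraic manipulation once $\Xi_1$ is in hand.
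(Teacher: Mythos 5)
The paper itself offers no proof of this lemma --- it is quoted from \cite[Lemma A.7]{giacomoni2007} --- so your argument stands on its own. Your route (integrate once to get $|\Xi_\tau'|^{q-2}\Xi_\tau'(r)=\tau^{q-1}-\int_0^r\Xi_\tau^{-\gamma}$, solve the resulting fixed-point problem locally by Schauder using the sandwich $\tfrac{\tau}{2}r\le\Xi_\tau\le 2\tau r$ to control the integrable singularity, continue to the maximal interval where $\Xi_\tau'>0$, generate the whole family by a similarity transform, and deduce concavity, monotonicity and \eqref{eq:smallest} from $\Xi_\tau''\le 0$, $\Xi_\tau'\ge 0$) is viable. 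The cited proof is slicker: multiplying the equation by $\Xi_\tau'$ gives the first integral $\tfrac{q-1}{q}\bigl(\tau^q-|\Xi_\tau'|^q\bigr)=\tfrac{1}{1-\gamma}\Xi_\tau^{1-\gamma}$, and separation of variables then yields existence, uniqueness, the explicit formulas for $\Xi_1$ and $R_1$, and the scaling law in one stroke, with no fixed-point machinery; but your construction reaches the same conclusions.

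There is, however, a concrete error in your scaling step. Your two conditions $AB=\tau$ and $A^{q+\gamma-1}B^{q}=1$ are correct, but solving them gives $A=\tau^{q/(1-\gamma)}$ and $B=\tau^{-(q+\gamma-1)/(1-\gamma)}$, hence $R_\tau=R_1/B=\tau^{(q+\gamma-1)/(1-\gamma)}R_1$; these are \emph{not} ``the exponents stated in the lemma'', so your assertion that the system reproduces them is false (the printed statement is in fact internally inconsistent: the map $r\mapsto\Xi_1\bigl(\tau^{-q/(q+\gamma-1)}r\bigr)$ has natural domain $r\le\tau^{q/(q+\gamma-1)}R_1$, not the displayed $R_\tau=\tau^{-q/(q+\gamma-1)}R_1$, and the sign of the exponent in the prefactor is likewise off; you should have flagged the mismatch rather than claimed agreement). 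More importantly, your concluding remark --- choose $\tau$ \emph{small} so that $R_\tau>\diam(\Omega)$ --- goes the wrong way. Since $q+\gamma-1>0$ and $\gamma\in(0,1)$, the exponent of $\tau$ in $R_\tau$ is positive, so $R_\tau$ increases with $\tau$: a larger initial slope keeps $\Xi_\tau'$ positive longer. This is also immediate from the first integral, which gives $\Xi_\tau(R_\tau)=\bigl(\tfrac{(q-1)(1-\gamma)}{q}\bigr)^{1/(1-\gamma)}\tau^{q/(1-\gamma)}$, and concavity yields $R_\tau\ge\Xi_\tau(R_\tau)/\tau=c\,\tau^{(q+\gamma-1)/(1-\gamma)}\to\infty$ as $\tau\to\infty$. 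Thus $R_\tau>\diam(\Omega)$ is achieved by taking $\tau$ sufficiently \emph{large}, which is how the barrier $w=M_\lambda\Xi_\tau(\delta(x))$ is actually used in the Appendix; with $\tau$ small your construction would not cover $\Omega$ and the subsequent comparison argument would fail.
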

\noindent \textbf{Proof of Lemma \ref{thm:pair1}}:\vspace{0.1cm}\\
The existence of solution $u_{\alpha, \beta}$ in $W_0^{1,q}(\Omega) \cap C_\delta(\overline{\Omega}) \cap C^{1,l}(\overline{\Omega})$ can be proved by adopting same arguments from \cite[Lemma 2.6 and Proposition 2.7]{giacomoni2020} and \cite[Lemma 3.2]{kumar2020}. For the sake of completeness, we provide a brief sketch of the proof. To prove the existence of a solution, we define the energy functional $\mathcal{J}: W_0^{1,q}(\Omega) \to \mathbb{R}$ as
$$\mathcal{J}(u):= \frac{\alpha}{p}\int_{\Omega} |\nabla u|^p ~dx + \frac{\beta}{q}\int_{\Omega} |\nabla u|^q ~dx  - \frac{\lambda}{1-\gamma} \int_{\Omega} |u|^{1-\gamma} ~dx.$$
Using Sobolev's embedding and lower semi-continuity of the norms, we infer that $\mathcal{J}$ is coercive and weakly lower semi-continuous. Therefore there exists a global minimizer $u \in W_0^{1,q}(\Omega)$ and w.l.g. we can take $u \geq 0.$ Using the strict convexity of the energy functional $\mathcal{J}$ on the positive cone $W_0^{1,q}(\Omega)^+$ and \cite[Lemma 3.1]{giacomoni2007}, we obtain $u(x) \geq c \delta(x)$ in $\Omega$ for some $c>0$ and $\mathcal{J}$ is G\^ateaux differentiable at $u$ (for more details see \cite[Lemma 2.6]{giacomoni2020}), hence $u$ is weak solution of $(P_\lambda).$ By following the same lines of the proof of \cite[Theorem 2]{he2008} and \cite[Lemma 3.2]{kumar2020}, we get $\|u_{\alpha, \beta}\|_{L^\infty(\Omega)} \leq C(\alpha, \beta).$ \vspace{0.1cm}\\
To prove the upper boundary behavior, we define the barrier function $$w(x):= M_\lambda \Xi_\tau(\delta(x)) \ \text{ for} \ x \in \Omega$$ where the choice of $M_\lambda:= M(\lambda) \geq 1$ will be determined later. Since $\partial \Omega \in C^2$, from \cite[Lemma 14.6]{gilbarg} there exists a constant $\nu >0$ such that $\delta \in C^2(\Omega_\nu)$ and by using the fact that $|\nabla \delta|=1$, we get for $ 0 \leq \xi \in C_c^\infty(\Omega_\nu)$
\begin{equation*}
\begin{split}
  \int_{\Omega_\nu} - \mathcal{L}^{\alpha,\beta}_{p,q} w \xi ~dx &=  \beta M_\lambda^{q-1} \int_{\Omega_\nu} (\Xi_\tau' (\delta))^{q-1} \nabla \delta \cdot \nabla \xi ~dx + \alpha M_\lambda^{p-1} \int_{\Omega_\nu} (\Xi_\tau' (\delta))^{p-1} \nabla \delta \cdot \nabla \xi ~dx\\
  &= - \beta M_\lambda^{q-1} \int_{\Omega_\nu} \nabla \left((\Xi_\tau' (\delta))^{q-1} \nabla \delta \right) \xi ~dx  - \alpha M_\lambda^{p-1} \int_{\Omega_\nu} \nabla \left((\Xi_\tau' (\delta))^{p-1} \nabla \delta \right) \xi ~dx\\
  &\geq - \beta M_\lambda^{q-1} \int_{\Omega_\nu} \bigg(( (\Xi_\tau'(\delta))^{q-1})' + \|\Delta \delta\|_{L^\infty(\Omega_\nu)} (\Xi_\tau'(\delta))^{q-1}\bigg) \xi ~dx\\
  & \quad \quad - \alpha M_\lambda^{p-1} \int_{\Omega_\nu} \bigg(( (\Xi_\tau'(\delta))^{p-1})' + \|\Delta \delta\|_{L^\infty(\Omega_\nu)} (\Xi_\tau'(\delta))^{p-1}\bigg) \xi ~dx.
 \end{split}
\end{equation*}
Now, by combining \eqref{eq:bigest} and \eqref{eq:smallest} together with above estimates, we get
\begin{equation*}
    \begin{split}
     - \mathcal{L}^{\alpha,\beta}_{p,q} w &\geq   \beta M_\lambda^{q-1}  \bigg((1-q)  (\Xi_\tau'(\delta))^{q-2} \Xi_\tau^{''}(\delta) - \|\Delta \delta\|_{L^\infty(\Omega_\nu)} (\Xi_\tau'(\delta))^{q-1}\bigg)  \\
     &\quad \quad  \quad \quad +  \alpha M_\lambda^{p-1}  \bigg((1-p)  (\Xi_\tau'(\delta))^{p-2} \Xi_\tau^{''}(\delta) - \|\Delta \delta\|_{L^\infty(\Omega_\nu)} (\Xi_\tau'(\delta))^{p-1}\bigg) \\
    & \geq M_\lambda^{q-1}  \bigg[\beta (1-q)  (\Xi_\tau'(\delta))^{q-2} \Xi_\tau^{''}(\delta) - \|\Delta \delta\|_{L^\infty(\Omega_\nu)} \left(\beta (\Xi_\tau'(\delta))^{q-1} + \alpha (\Xi_\tau'(\delta))^{p-1} \right)\bigg]\\
    & \geq M_\lambda^{q-1}  \bigg[ \frac{\beta}{\Xi_\tau^\gamma(\delta)} - \|\Delta \delta\|_{L^\infty(\Omega_\nu)} \left( \beta (\Xi_\tau'(\delta))^{q-1} + \alpha (\Xi_\tau'(\delta))^{p-1} \right)\bigg]
      \end{split}
\end{equation*}
weakly in $\Omega_\nu.$ Since $\Xi_\tau(0)= 0$, $\Xi_\tau'(0)= \tau$, the concavity of the function $\Xi_\tau$ implies $$\Xi_\tau (\delta) \leq \tau \delta \ \ \text{and} \ \  \Xi_\tau' (\delta)  \leq \tau.$$
Now by choosing $\nu$ small enough such that $\frac{1}{\Xi_\tau^\gamma(\nu)} \geq \frac{1}{2} \left(\|\Delta \delta\|_{L^\infty(\Omega_\nu)} \tau^{p-1} (\beta \tau^{q-p} + \alpha) \right) \ \text{in} \ \Omega_\nu$ we get 
$$- \mathcal{L}^{\alpha,\beta}_{p,q} w \geq  \frac{M_\lambda^{q-1}}{2} \frac{\beta}{\Xi_\tau^\gamma(\delta)} \geq \frac{M_\lambda^{q-1+\gamma}}{2} \frac{\beta}{w^\gamma}  \ \text{ weakly in}\ \Omega_\nu.$$
Now by choosing $M_\lambda$ large enough such that $M_\lambda^{q-1+\gamma} \beta \geq \lambda$ and using the fact that $u_{\alpha, \beta} \in L^\infty_{loc}(\Omega)$, weak comparison principle gives $u_{\alpha, \beta} \leq C \delta$ in $\Omega$ for some $C>0.$ Finally, \cite[Theorem 1,7]{giacomoni2020} implies $u_{\alpha, \beta} \in C^{1,l}(\overline{\Omega})$ for some $l \in (0,1).$
\qed
\begin{remark}\label{rem:imp}
Let $u_{\alpha, \beta}$ be weak solution solution of the problem $(P_{\alpha, \beta})$ such that $$\|u_{\alpha, \beta}\|_{L^\infty(\Omega)} \leq C_1(\alpha, \beta) \ \text{and} \ \|u_{\alpha, \beta}\|_{C^{1,\ell}(\overline{\Omega})} \leq C_2(\alpha, \beta).$$ Then, by looking discreetly the proof of \cite[Theorem 2]{he2008}, \cite[Lemma 3.2]{kumar2020} and using \cite[Remark 1.10]{giacomoni2020}, we get $$\lim_{\alpha \to 0} C_1(\alpha, \beta) \leq C(\beta) \ \text{and} \ \lim_{\alpha \to 0} C_2(\alpha, \beta) < C(\beta).$$
\end{remark}

\section*{Acknowledgement}
The author acknowledge the support from Czech Science Foundation, project GJ19-14413Y and would like to thank Prof. Jacques Giacomoni who provided valuable comments
on the earlier version of the manuscript.

\bibliographystyle{siam}

\begin{thebibliography}{99}
%
\bibitem{amann1976} H. Amann, {\em Fixed point equations and nonlinear eigenvalue problems in ordered banach spaces}, SIAM review, \textbf{18} (1976), pp. 620–709.
\bibitem{arora2019} R. Arora, J. Giacomoni, D. Goel, K. Sreenadh, {\em  Positive solutions of 1-D half Laplacian equation with singular and exponential nonlinearity}, Asymptot. Anal. \textbf{118} (2020), no. 1-2, 1–34.
\bibitem{arora-2020} R. Arora, J. Giacomoni and G. Warnault, {\em Regularity results for a class of nonlinear fractional Laplacian and singular problems}, Nonlinear Differ. Equ. Appl. \textbf{28}, 30 (2021).
\bibitem{bahrouni} A. Bahrouni, V. D. R\u{a}dulescu, and D. D. Repov\u{s}, {\em Double phase transonic flow problems with variable growth:  nonlinear patterns and stationary waves}, Nonlinearity, \textbf{32} (2019), p. 2481
\bibitem{Ball-76} J. M. Ball, {\em Convexity conditions and existence theorems in nonlinear elasticity}, Arch. Ration. Mech. Anal., \textbf{63} (1976), pp. 337–403.
\bibitem{benci-2000} V. Benci, P. d’Avenia, D. Fortunato, and L. Pisani, {\em Solitons in several space dimensions: Derrick’s problem and infinitely many solutions}, Arch. Ration. Mech. Anal., \textbf{154} (2000), pp. 297–324.
\bibitem{BougGiacHern} B. Bougherara, J. Giacomoni and J. Hern\'andez, {\it Some regularity results for a singular elliptic problem}, Dyn. Syst. Differ. Equ. Appl. Proc. AIMS 2015 (2015), 142-150.
\bibitem{Brezis}  H. Brezis, Analyse Fonctionnelle, Th\'eorie et Applications, second ed., Masson, Paris, 1983, (in French).
\bibitem{Can} A. Canino, B. Sciunzi and A. Trombetta, {\it Existence and uniqueness for p-Laplace equations involving singular nonlinearities}, NoDEA Nonlinear Differ. Equ. Appl., {\bf 23} (2016), no. 2, Art. 8, 18 pp.
\bibitem{Cherfils-2005} L. Cherfils and Y. Ilyasov, {\em On the stationary solutions of generalized reaction diffusion equations with $p\&q$-Laplacian}, Comm. Pure Appl. Anal., \textbf{4} (2005), 922.

\bibitem{colombo-15} M. Colombo and G. Mingione, {\em Regularity for double phase variational problems}, Arch. Ration. Mech. Anal., \textbf{ 215} (2015), no. 2, 443-496.
\bibitem{Colombo-15} M. Colombo and G. Mingione, {\em Bounded minimisers of double phase variational integrals}, Arch.
Ration. Mech. Anal., \textbf{ 218} (2015), no. 1, 219-273.
\bibitem{Crandall-77}  M. G. Crandall, P. H. Rabinowitz and L. Tartar, {\em On a Dirichlet problem with a singular nonlinearity}, Comm. Partial Differential Equations \textbf{2} (1977), 193-222.
\bibitem{Dhanya}  R. Dhanya, J. Giacomoni, S. Prashanth and K. Saoudi, {\em Global bifurcation and local multiplicity results for elliptic equations with singular nonlinearity of super exponential growth in $\mathbb {R}^ 2$},  Adv. Differential Equations, {\bf 17} (2012), no. 3/4, 369-400.
\bibitem{Dhanya-15} R. Dhanya, E. Ko, E., and R. Shivaji, {\em A three solution theorem for singular nonlinear elliptic boundary value problems},  J. Math. Anal. Appl., \textbf{424} (1), (2015),  598-612.
\bibitem{DiHernRako} J. I. Di\'az, J. Hern\'andez and J. M. Rakotoson, {\em On very weak positive solutions to some semilinear elliptic problems with simultaneous singular nonlinear and spatial dependence terms}, Milan J. Math., {\bf 79} (2011), 233-245.
\bibitem{DMO} J. I. Di\'az, J. M. Morel and L. Oswald, {\em An elliptic equation with singular nonlinearity}, Commun. Partial Differential Equations, {\bf 12} (1987), 1333-1344.
\bibitem{GR} M. Ghergu and V. Radulescu, Singular elliptic problems: bifurcation and asymptotic analysis, Oxford University Press,
2008.
\bibitem{GR1} M. Ghergu and V. Radulescu, {\em Multiparameter bifurcation and asymptotics for the singular Lane-Emden-Fowler equation
with a convection term}, Proceedings of the Royal Society of Edinburgh: Section A (Mathematics), {\bf 135} (2005), 61-84.
\bibitem{giacomoni2020} J.~Giacomoni, D.~Kumar, and K.~Sreenadh, {\em Sobolev and Hölder regularity results for some singular nonhomogeneous quasilinear problems}, Calc. Var. 60, 121 (2021). 
\bibitem{giacomoni-2019} J. Giacomoni, T. Mukherjee and K. Sreenadh, {\em Existence of three positive solutions for a nonlocal singular Dirichlet boundary problem} Adv. Nonlinear Stud. \textbf{19} (2019), no. 2, 333–352.
\bibitem{giacomoni2007} J. Giacomoni, I. Schindler and P. Tak\'a\u{c}, Sobolev versus H\"older local minimizers and existence of multiple solutions for a singular quasilinear equation, Ann. Sc. Norm. Super. Pisa Cl. Sci. (5) \textbf{6} (2007), no. 1, 117-158.
\bibitem{Giasree} J. Giacomoni and K. Sreenadh, {\it Multiplicity results for a singular and quasilinear equation}, Discrete Contin. Syst., Proceedings of the 6th AIMS International Conference, Suppl., (2007), 429-435.
\bibitem{gilbarg} D. Gilbarg and N. S. Trudinger, Elliptic Partial Differential Equations of Second Order, SpringerVerlag, New-York, 1983.
\bibitem{Haitao} Y. Haitao, {\it Multiplicity and asymptotic behavior of positive solutions for a singular semilinear elliptic problem}, J. Differential Equations, {\bf 189} (2003), 487-512.
\bibitem{he2008}
C.~He and G. Li, {\em The regularity of weak solutions to nonlinear scalar field elliptic equations containing $p\& q$-laplacians}, Ann. Acad. Sci. Fenn. Math, \textbf{33} (2008), pp. 337–371.
\bibitem{HM} J. Hern\'andez and F. J. Mancebo, {\em Singular elliptic and parabolic equations}, Handbook of Differential Equations, {\bf 3} (2006), 317-400.
\bibitem{HernManVega} J. Hern\'andez, F. Mancebo and J. M. Vega, {\it Nonlinear singular elliptic problems: recent results and open problems. Nonlinear elliptic and parabolic problems}, 227-242, Progr. Nonlinear Differential Equations Appl., 64, Birkh\"auser, Basel, 2005.
\bibitem{HirSacShi} N. Hirano, C. Saccon and N. Shioji, {\em Existence of multiple positive solutions for singular elliptic
problems with concave and convex nonlinearities}, Adv. Differential Equations, {\bf 9} (2004), 197-220.
\bibitem{Ko} E. Ko, K.L. Eun, and R. Shivaji. {\em Multiplicity results for classes of infinite positone problems}, Z. Anal. Anwend. \textbf{30}, no. 3 (2011): 305-318.
\bibitem{kumar2020} D. Kumar, V. D. R\u{a}dulescu, and K. Sreenadh, {\em Singular elliptic problems with unbalanced growth and critical exponent}, Nonlinearity, \textbf{33} (2020), p. 3336.
\bibitem{lady1968} O. A. Ladyzhenskaya and N. N. Ural’tseva, {\em Linear and quasilinear elleptic equations}, Academic Press, 1968.
\bibitem{le2007} A. Lê, {\em On the local hölder continuity of the inverse of the $p$-laplace operator}, Proc. Amer. Math. Soc., \textbf{135} (2007), pp. 3553–3560.
\bibitem{lieberman1991} G. M. Lieberman, {\em The natural generalization of the natural conditions of ladyzhenskaya and urall’tseva for elliptic equations}, Comm. Partial Differential Equations, \textbf{16} (1991), pp. 311–361.
\bibitem{Marano-2018} S. Marano and S. Mosconi, {\em Some recent results on the Dirichlet problem for $(p, q)$-Laplacian equation}, Discrete Contin. Dyn. Syst. Ser. S,  \textbf{11} (2018), 279-291.
\bibitem{Marcellini-86} P. Marcellini, {\em On the definition and the lower semicontinuity of certain quasiconvex integrals}, Ann. Inst. H. Poincar\'e, Anal. Non Lin\'eaire, \textbf{3} (1986), 391-409.
\bibitem{Marcellini-91} P. Marcellini, {\em Regularity and existence of solutions of elliptic equations with $p$, $q$–growth conditions}, J. Differential Equations, \textbf{ 90} (1991), 1-30.
\bibitem{papageorgiou2020} N. S. Papageorgiou, V. D. Rădulescu, and D. D. Repovš, {\em Nonlinear nonhomogeneous singular problems}, Calculus of Variations and Partial Differential Equations, \textbf{59} (2020), pp. 1–31.
\bibitem{pucci2007} P. Pucci and J. B. Serrin, {\em The maximum principle}, vol. 73, Springer Science and Business Media, 2007.
\bibitem{Sim_son_2021} I. Sim and B. Son, {\it Positive solutions to classes of infinite semipositone (p,q)-Laplace problems with nonlinear boundary conditions}, J. Math. Anal. Appl. 494 (2021), no. 1, 124577, 11 pp. 
\bibitem{Simon} J. Simon, R\'egularit\'e de la solution d’une \'equation non lin\'eaire dans $\mathbb{R}^N$, in journ\'ee d’analyse non lin\'eaire. Proceedings, Besan\c con, France, Lectures notes in Mathematics no. 665. Springer-Verlag, 1997.
\bibitem{Zhikov-87}V. V. Zhikov, {\em Averaging of functionals of the calculus of variations and elasticity theory}, Math. USSR Izv., \textbf{29} (1987),
p. 33.
\bibitem{Zhikov-95} V. V. Zhikov, {\em On Lavrentiev’s phenomenon}, Russian J. Math. Phys., \textbf{3} (1995), pp. 249–269.
\end{thebibliography}

\end{document}